\newlength{\fighskip} \fighskip=2pt
\newlength{\figvskip} \figvskip=3pt
\numberwithin{equation}{section}
\newcommand{\C}{\mathbb{C}}
\newcommand{\R}{\mathbb{R}}
\newcommand{\Z}{\mathbb{Z}}
\newcommand{\EE}{{\mathbb E}}
\newcommand{\bbracket}[1]{\left[#1\right]}
\newcommand{\OO}{{\mathcal O}}
\newcommand{\sO}{\mathcal O^{\#}}
\newcommand{\PP}{\mathrm{P}}
\newcommand{\LR}[1]{\left ( #1 \right )}
\newcommand{\Lr}[1]{\left [ #1 \right ]}
\newcommand{\lR}[1]{\left < #1\right >}
\newcommand{\TopCor}[1]{\left < #1\right >}
\newcommand{\ali}[1]{$$\begin{aligned} #1 \end{aligned}$$}
\newcommand{\XX}{\mathbb X}
\renewcommand{\PP}{\mathbb P}
\newcommand{\lra}{ \longrightarrow}
\newcommand{\lmt}{ \longmapsto}
\newcommand{\w}{\wedge}
\newcommand{\cA}{\mathcal A} 
\newcommand{\xx}{\mathbf{x}}
\newcommand{\xp}{\mathbf{p}}
\newcommand{\pp}[2]{\frac{\partial #1}{\partial #2}} 
\newcommand{\blue}[1]{{\color{blue}{#1}}}
\newcommand{\inv}{^{-1}}
\newcommand{\LL}{\mathscr{L} }
\renewcommand{\O}{{\mathcal O}}
\DeclareMathOperator{\Cyc}{Cyc}
\DeclareMathOperator{\sign}{sign}
\theoremstyle{plain}
\newtheorem{thm}{Theorem}[section]
\newtheorem{thm-defn}{Theorem/Definition}[section]
\newtheorem{lem}[thm]{Lemma}
\newtheorem{lem-defn}[thm]{Lemma/Definition}
\newtheorem{prop}[thm]{Proposition}
\newtheorem{cor}[thm]{Corollary}
\newtheorem*{thm*}{Theorem}
\newtheorem*{prop*}{Proposition}
\newtheorem*{lem*}{Lemma}
\theoremstyle{definition}
\newtheorem{defn}[thm]{Definition}
\newtheorem{eg}[thm]{Example}
\newtheorem*{defn*}{Definition}
\theoremstyle{remark}
\newtheorem{rmk}[thm]{Remark}
 \title{Stochastic Calculus  and  Hochschild Homology}
  \author{Si Li, Zichang Wang, Peng Yang}
\address{
S.~ Li:
Department of Mathematical Sciences, Tsinghua University, Beijing, China
}
\email{sili@mail.tsinghua.edu.cn}
  \address{
Z.C. ~ Wang: Qiuzhen College, Tsinghua University, Beijing, China
}
\email{zichang-21@mails.tsinghua.edu.cn}
  \address{
P.~ Yang: Department of Mathematical Sciences, Tsinghua University, Beijing, China
}
\email{yang-p20@mails.tsinghua.edu.cn}
\begin{document}

\begin{abstract}
This paper is a case study of probabilistic approach to homological aspects of topological quantum field theory via the example of topological quantum mechanics. We propose topological correlations in terms of large variance limit. An investigation on the relation between probabilistic topological correlations on the circle and  Hochschild homology is illustrated. 
\end{abstract}

\maketitle

\tableofcontents






\section{Introduction}

In physics, mechanics is usually described by an action (which we express on the phase space)
$$
S= \int pdx-H dt.
$$
Here $(x,p)$ are canonical conjugate variables and $H$ is the Hamiltonian. When $H=0$ vanishes, the theory becomes topological. We call this model topological quantum mechanics, which is one simple example  of topological quantum field theories \cite{Atiyah}. Although the Hamiltonian is trivial, correlation functions in this topological model exhibit nontrivial properties. For example, when the time is on the circle $S^1$, topological correlations are related to trace map in deformation quantization and algebraic index \cites{GLL, GLX}. 

The goal of this paper is two-folded. The first is to build up the  probabilistic theory of topological correlation functions in this example as a large variance limit of stochastic process (Definition \ref{defn-Top}). The second is to illustrate the relation between probabilistic topological correlations and aspects of noncommutative geometry. We hope this example would shed some light on deeper connections between stochastic calculus and topological quantum field theories. 

\textbf{Acknowledgment}. The authors would like to thank Chenlin Gu and Elton Hsu for helpful communications. This work of S. L. is supported by the National Key R \& D Program of China (NO. 2020YFA0713000).

\section{Stochastic Calculus on Loops}\label{sec:Stochastic}

\subsection{Probabilistic Preliminaries}
We start by collecting some useful results on random variables that will be frequently used. 

\begin{prop}[\cite{JFLGall}, Proposition 1.1]\label{prop-Gaussian}
If $\{X_m\}_{m\ge 1}$ are Gaussian variables such that
\ali{
X_m\rightarrow X \text{\ \ in } \mathbb{P}
}
then $X$ is also a (possibly degenerate, i.e. variance zero) Gaussian variable, and the convergence holds in  $L^p$ for $1\leq p<\infty$.
\end{prop}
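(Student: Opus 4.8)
The plan is to translate the hypothesis into a statement about characteristic functions and read off the Gaussian parameters of the limit. Write $\mu_m=\mathbb{E}[X_m]$ and $\sigma_m^2=\operatorname{Var}(X_m)$, so that $\phi_m(t):=\mathbb{E}[e^{itX_m}]=\exp\!\big(i\mu_m t-\tfrac{1}{2}\sigma_m^2 t^2\big)$. Convergence in probability implies convergence in distribution, hence $\phi_m(t)\to\phi(t):=\mathbb{E}[e^{itX}]$ for every $t\in\mathbb{R}$; moreover $\phi$ is continuous with $\phi(0)=1$, so it is nonzero in a neighbourhood of the origin.

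First I would pin down the variances. Fixing any $t_0$ with $\phi(t_0)\neq 0$, from $|\phi_m(t_0)|=e^{-\sigma_m^2 t_0^2/2}$ we get $\sigma_m^2=-2t_0^{-2}\log|\phi_m(t_0)|\to -2t_0^{-2}\log|\phi(t_0)|=:\sigma^2\in[0,\infty)$. Consequently $|\phi(t)|=\lim_m e^{-\sigma_m^2 t^2/2}=e^{-\sigma^2 t^2/2}$ for all $t$, so in fact $\phi$ never vanishes, and $\psi(t):=\phi(t)e^{\sigma^2 t^2/2}=\lim_m e^{i\mu_m t}$ is a continuous, unit-modulus function with $\psi(0)=1$. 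Passing $m\to\infty$ in $e^{i\mu_m(s+t)}=e^{i\mu_m s}e^{i\mu_m t}$ shows $\psi$ is multiplicative, so $\psi(t)=e^{i\mu t}$ for a unique $\mu\in\mathbb{R}$; therefore $\phi(t)=\exp(i\mu t-\tfrac{1}{2}\sigma^2 t^2)$, i.e.\ $X\sim N(\mu,\sigma^2)$, which is degenerate precisely when $\sigma^2=0$. A short dominated-convergence argument applied to $\int_{-\delta}^{\delta}e^{i\mu_m t}\,dt$ on a small interval then shows $\sup_m|\mu_m|<\infty$ and, by uniqueness of the limiting character, $\mu_m\to\mu$.

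For the $L^p$ assertion the key input is a uniform moment bound. Since $(\mu_m,\sigma_m^2)$ converges, $\sup_m\sigma_m^2<\infty$ and $\sup_m|\mu_m|<\infty$, and the $q$-th absolute moment of $N(\mu_m,\sigma_m^2)$ is a continuous function of $(\mu_m,\sigma_m^2)$; hence $C_q:=\sup_m\mathbb{E}[|X_m|^q]<\infty$ for every $q\in[1,\infty)$. Fix $p\in[1,\infty)$; applying this with $q=2p$ and using $\mathbb{E}[|X|^{2p}]<\infty$ gives $\sup_m\mathbb{E}[|X_m-X|^{2p}]\le 2^{2p}\big(C_{2p}+\mathbb{E}[|X|^{2p}]\big)<\infty$, so the family $\{|X_m-X|^p\}_m$ is bounded in $L^2$ and hence uniformly integrable. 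Since $X_m\to X$ in probability forces $|X_m-X|^p\to 0$ in probability, Vitali's convergence theorem yields $\mathbb{E}[|X_m-X|^p]\to 0$, which is the claim.

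The one place that needs care is the second paragraph: ruling out escape of mass in the means $\mu_m$ and upgrading the subsequential information into convergence of the full parameter sequence. Everything downstream of the identification of the limit law is soft — a uniform (sub-Gaussian type) moment estimate combined with Vitali — so I expect no real obstacle there.
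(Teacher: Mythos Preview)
Your argument is correct. The paper does not actually print a proof of this proposition---it just cites Le Gall---but the \LaTeX\ source contains a suppressed proof whose first half matches yours closely: characteristic functions, extract $\sigma_m^2\to\sigma^2$ from the modulus near the origin, then control $\mu_m$ by integrating $e^{i\mu_m t}$ over a small interval and invoking dominated convergence. Your use of the multiplicative property of $\psi(t)=\lim_m e^{i\mu_m t}$ to identify it as a character is a minor stylistic variation on the same idea.

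Where you genuinely diverge is the $L^p$ step. The paper's (commented-out) argument bootstraps the first part: since $X_n-X=\lim_m(X_n-X_m)$ in probability, the already-proven statement shows $X_n-X$ is itself Gaussian for each $n$; then $X_n-X\to 0$ in probability forces its mean and variance (and hence every $L^p$ norm) to tend to zero. Your route through a uniform $2p$-th moment bound and Vitali's theorem is perfectly valid, but the bootstrap is more economical---it avoids uniform integrability entirely and reuses the work you have just done.
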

The above proposition  tells that we do not need to distinguish different types of $L^p$ convergence of Gaussian random variables.
We will also need the following  useful proposition proved by Lévy.  
\begin{prop}[\cite{KaiLaiChung}, Theorem 5.3.4]\label{Levy lem}
An infinite sum of independent random variables converges almost surely if and only if it converges in probability.
\end{prop}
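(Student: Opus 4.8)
The plan is as follows. Write $S_n=\suml_{k=1}^{n}X_k$ for the partial sums, so that convergence of the infinite sum means convergence of the sequence $(S_n)_{n\ge1}$. The ``only if'' direction uses no independence and is entirely standard: almost sure convergence of any sequence of random variables implies convergence in probability. The substance is the ``if'' direction, and the whole point is that independence allows one to control the running maximum of the partial sums by a single late increment.

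So assume $S_n\to S$ in probability. Then $(S_n)$ is Cauchy in probability, i.e. for every $\eps>0$ we have $\PP\LR{\abs{S_m-S_{m'}}>\eps}\to 0$ as $m,m'\to\infty$; write $r_N(\eps):=\sup_{m,m'\ge N}\PP\LR{\abs{S_m-S_{m'}}>\eps}$, so $r_N(\eps)\to 0$. I will show $(S_n)$ is almost surely Cauchy, which gives almost sure convergence. The tool is a L\'evy--Ottaviani maximal inequality: if $Y_1,\dots,Y_p$ are independent and $U_j=Y_1+\cdots+Y_j$, then for $\eps>0$,
\[
\PP\LR{\max_{1\le j\le p}\abs{U_j}>2\eps}\ \le\ \frac{\PP\LR{\abs{U_p}>\eps}}{\,1-\max_{1\le j\le p}\PP\LR{\abs{U_p-U_j}>\eps}\,},
\]
valid whenever the denominator is positive. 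I would prove this by setting $\tau=\min\{j\le p:\abs{U_j}>2\eps\}$, noting that $\{\tau=j\}$ depends only on $Y_1,\dots,Y_j$ and is therefore independent of $\{\abs{U_p-U_j}\le\eps\}$, and summing the disjoint inclusions $\{\tau=j\}\cap\{\abs{U_p-U_j}\le\eps\}\subseteq\{\abs{U_p}>\eps\}$ over $j$.

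To deploy it, fix $\eps>0$ and $\delta\in(0,\tfrac14)$, choose $N$ with $r_N(\eps)<\delta$, and apply the inequality to $Y_k=X_{N+k}$, so that $U_j=S_{N+j}-S_N$. Every probability appearing is then $<\delta$, whence $\PP\LR{\max_{1\le j\le p}\abs{S_{N+j}-S_N}>2\eps}\le\delta/(1-\delta)$ for all $p$; letting $p\to\infty$ by monotone convergence gives $\PP\LR{\sup_{j\ge1}\abs{S_{N+j}-S_N}>2\eps}\le\delta/(1-\delta)$. Since $\abs{S_m-S_{m'}}\le\abs{S_m-S_N}+\abs{S_{m'}-S_N}$ for $m,m'\ge N$, the event $\{\sup_{m,m'\ge N}\abs{S_m-S_{m'}}>4\eps\}$ is contained in $\{\sup_{j\ge1}\abs{S_{N+j}-S_N}>2\eps\}$, hence also has probability $\le\delta/(1-\delta)$; and because $\limsup_{m,m'\to\infty}\abs{S_m-S_{m'}}\le\sup_{m,m'\ge N}\abs{S_m-S_{m'}}$, the same bound holds for $\PP\LR{\limsup_{m,m'\to\infty}\abs{S_m-S_{m'}}>4\eps}$. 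As $\delta\downarrow0$ (with $N=N(\delta)$) this forces $\PP\LR{\limsup_{m,m'\to\infty}\abs{S_m-S_{m'}}>4\eps}=0$; taking the union over $\eps=1/n$, $n\ge1$, shows $\limsup_{m,m'\to\infty}\abs{S_m-S_{m'}}=0$ almost surely, i.e. $(S_n)$ is a.s. Cauchy and therefore a.s. convergent.

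The only genuinely nontrivial ingredient is the maximal inequality: controlling $\max_{j\le p}\abs{U_j}$ by the single tail value $\abs{U_p}$ is precisely where independence enters, and the statement fails for general dependent sequences, so that is where I expect the real work to lie; everything else is routine measure-theoretic bookkeeping with the Cauchy criterion.
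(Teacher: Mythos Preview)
Your argument is correct: the Ottaviani--L\'evy maximal inequality is stated and proved correctly, and the passage from Cauchy in probability to almost sure Cauchy via that inequality is carried out cleanly. Note, however, that the paper does not actually prove this proposition; it merely quotes it as Theorem~5.3.4 of Chung's textbook (the proof environment in the source is wrapped in \texttt{\textbackslash iffalse} and says only ``See Theorem~5.3.4 in \cite{KaiLaiChung}''). So there is no proof in the paper to compare against --- you have supplied a full, self-contained argument where the authors were content to cite. Your route via the Ottaviani inequality is in fact essentially the approach Chung takes, so there is no methodological divergence either.
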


\begin{defn}
Let $X=\{X_t\}_{t\in T}$ and $Y=\{Y_t\}_{t\in T}$ be stochastic processes defined on the same index set T and with values in a mutual measure space $(\Omega,\mathcal{F})$. Then $Y$ is a modification of $X$ if for every $t\in T$, $\PP[X_t=Y_t]=1$.
\end{defn}

\begin{prop}[\cite{JFLGall}, Theorem 2.9, Kolmogorov’s Lemma]\label{Kolmogorov’s Lemma}
Let $\{X_t\}_{t \in T}$ be a stochastic process defined on a probability space $(\Omega, \mathcal{F}, \mathbb{P})$, where $T \subset \mathbb{R}$ is a compact interval. Suppose there exist  $\alpha,\beta,C > 0$ such that for all $s, t \in T$,
\begin{equation*}
    \mathbb{E}\left[|X_t - X_s|^\alpha\right] \leq C|t - s|^{1 + \beta},
\end{equation*}
then there exists a modification $\{\tilde{X}_t\}_{t \in T}$ of the process $\{X_t\}$ which is H\"older continuous of order $\gamma$ for every $\gamma \in (0, \beta/\alpha)$: That is, for every $\gamma \in (0, \beta/\alpha)$, there exists a finite constant $M>0$ such that
\begin{equation*}
    |\tilde{X}_t - \tilde{X}_s| \leq M|t - s|^\gamma \quad \text{almost surely}
\end{equation*}
for every $s, t \in T$.
\end{prop}

\subsection{Gaussian Free Field on $S^1$}
We place the time $t$ on the circle $S^1$, which is identified with $\R/\Z$.

\begin{prop}[Gaussian Free Field on $S^1$]\label{propGFF}
Let $X_m$, $Y_m$, $P_m$, $Q_m$ be i.i.d. normal distributions with variance $\sigma^2>0$. Then the following infinite sums 
\ali{
\XX(t)& =\sum_{m> 0} \frac{1}{\sqrt 2\pi m} \LR{\cos\LR{2\pi mt} X_m + \sin\LR{2\pi mt} Y_m} \\
\PP(t)& =\sum_{m> 0} \frac{1}{\sqrt 2\pi m} \LR{\cos\LR{2\pi mt} P_m + \sin\LR{2\pi mt} Q_m}
}
$L^2$-converge for every $t$. Moreover, there exist  continuous modifications of processes $\XX(t)$ and $\PP(t)$.
\end{prop}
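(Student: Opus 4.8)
The plan is to handle $\XX(t)$ in detail (the series for $\PP(t)$ has identical form) and to split the assertion into the pointwise $L^2$-convergence, which is elementary, and the almost-sure uniform convergence, which is the substantive part. Write $Z_m(t):=\tfrac{1}{\sqrt 2\,\pi m}\bigl(\cos(2\pi mt)\,X_m+\sin(2\pi mt)\,Y_m\bigr)$ and $S_N(t):=\sum_{0<m\le N}Z_m(t)$. For fixed $t$ the $Z_m(t)$ are independent, centered, with $\EE[Z_m(t)^2]=\tfrac{\sigma^2}{2\pi^2m^2}$, so $\EE\,|S_N(t)-S_M(t)|^2=\sum_{M<m\le N}\tfrac{\sigma^2}{2\pi^2m^2}\to 0$; hence $(S_N(t))_N$ is Cauchy in $L^2$ and converges. (By Proposition~\ref{prop-Gaussian} the limit is automatically Gaussian, though this is not needed.)

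For almost-sure uniform convergence I would run the standard dyadic-block argument for random trigonometric series. Put $B_j(t):=\sum_{2^j<m\le 2^{j+1}}Z_m(t)$, a random trigonometric polynomial of degree $\le 2^{j+1}$ with $\EE[B_j(t)^2]=\sum_{2^j<m\le 2^{j+1}}\tfrac{\sigma^2}{2\pi^2m^2}=:v_j^2$ (constant in $t$), so $v_j^2\asymp\sigma^2 2^{-j}$. The crucial estimate is
\[
\EE\,\bigl\|B_j\bigr\|_{C(S^1)}\;\le\;C\,\sigma\,\sqrt{j}\,2^{-j/2}.
\]
I would prove it by discretization: sample $B_j$ at $K$ equally spaced points with $K$ a suitable fixed multiple of $2^{j}$; the values $B_j(t_1),\dots,B_j(t_K)$ are each $N(0,v_j^2)$, so their expected maximum is $\lesssim v_j\sqrt{\log K}\asymp\sigma\,\sqrt{j}\,2^{-j/2}$ by the elementary sub-Gaussian maximal bound, while the interpolation error $\bigl\|B_j\bigr\|_{C(S^1)}-\max_k|B_j(t_k)|$ is controlled by Bernstein's inequality $\bigl\|B_j'\bigr\|_{C(S^1)}\le 2\pi\cdot 2^{j+1}\,\bigl\|B_j\bigr\|_{C(S^1)}$, which for $K$ a large enough multiple of $2^{j}$ forces $\bigl\|B_j\bigr\|_{C(S^1)}\le 2\max_k|B_j(t_k)|$.

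Since $\sum_j\sqrt{j}\,2^{-j/2}<\infty$, the estimate yields $\sum_j\EE\,\|B_j\|_{C(S^1)}<\infty$, hence $\sum_j\|B_j\|_{C(S^1)}<\infty$ almost surely, so $S_{2^J}$ converges uniformly as $J\to\infty$ on an event of probability one. To pass from the dyadic subsequence to arbitrary $N$ I would invoke Lévy's maximal inequality, which holds in the Banach space $C(S^1)$ because each $Z_m$ is symmetric, to get $\EE\,\max_{2^j<n\le 2^{j+1}}\bigl\|\sum_{2^j<m\le n}Z_m\bigr\|_{C(S^1)}\le 2\,\EE\,\|B_j\|_{C(S^1)}$; summing in $j$ this is again finite, so $\bigl\|\sum_{m>N}Z_m\bigr\|_{C(S^1)}\to 0$ almost surely. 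Thus $(S_N)$ is almost surely uniformly Cauchy in $C(S^1)$ and converges to a continuous limit, which is the desired continuous version of $\XX$; the same argument applies verbatim to $\PP$.

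The step I expect to be the main obstacle is the block bound $\EE\,\|B_j\|_{C(S^1)}\lesssim\sigma\sqrt{j}\,2^{-j/2}$: one must control the true supremum of a high-degree random trigonometric polynomial, not merely finitely many of its values, and this is exactly where Bernstein's inequality enters (equivalently one could use Dudley's entropy bound for the Gaussian process $t\mapsto B_j(t)$, or a Kolmogorov-continuity estimate based on $\EE|\XX(t)-\XX(s)|^2=\tfrac{2\sigma^2}{\pi^2}\sum_{m\ge1}\tfrac{\sin^2(\pi m(t-s))}{m^2}\le C\sigma^2|t-s|$, though that route only produces a continuous modification and still needs a uniform-modulus argument to identify it with the partial sums). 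A secondary subtlety: Proposition~\ref{Levy lem}, stated for real-valued sums, only gives almost-sure convergence of $\XX(t)$ for each fixed $t$; the upgrade to uniform convergence requires the Banach-valued maximal inequality above, whose hypothesis—symmetry of the $Z_m$—should be noted to hold automatically since $X_m,Y_m$ are centered Gaussians.
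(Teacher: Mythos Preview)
Your argument is correct and follows the standard Salem--Zygmund/Kahane route for random trigonometric series: dyadic blocking, a sub-Gaussian maximum over a grid of $\asymp 2^j$ points, Bernstein's inequality to pass from the grid to the true supremum, and L\'evy's Banach-space maximal inequality (valid here because each $Z_m$ is symmetric) to go from dyadic partial sums to arbitrary ones. This is genuinely different from, and more robust than, the paper's approach.

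The paper argues that uniform convergence would follow once $\sum_{m>0}\tfrac{1}{\sqrt{2}\,\pi m}\bigl(|X_m|+|Y_m|\bigr)<\infty$ almost surely, and claims this series converges in $L^2$ so that Proposition~\ref{Levy lem} applies. That step is in fact mistaken: since $\EE|X_m|=\sigma\sqrt{2/\pi}$, the partial sums have expectations $\asymp\sigma\sum_{m\le N}m^{-1}\to\infty$, so the series diverges in $L^1$ (hence in $L^2$) and, by the three-series theorem, diverges almost surely. What your method buys is precisely a way around this: one cannot dominate the random Fourier series termwise by an absolutely convergent series with $1/m$ coefficients, so some oscillation/cancellation input---your block estimate $\EE\|B_j\|_{C(S^1)}\lesssim\sigma\sqrt{j}\,2^{-j/2}$, or equivalently a Dudley/Kolmogorov-continuity argument using $\EE|\XX(t)-\XX(s)|^2\le C\sigma^2|t-s|$ together with the It\^o--Nisio theorem for symmetric $C(S^1)$-valued summands---is genuinely needed. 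Your closing remark already anticipates that Proposition~\ref{Levy lem} alone is insufficient; the point is sharper than you state, since even the scalar absolute-value series the paper writes down fails to converge.
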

\begin{proof} 
Observe that
\ali{
\sum_{m>0} \LR{\frac{1}{\sqrt 2\pi m}}^2 \EE[\LR{\cos\LR{2\pi mt} X_m + \sin\LR{2\pi mt} Y_m}^2]= \sum_{m> 0} \frac{\sigma^2}{2 \pi^2 m^2}=\frac{\sigma^2}{12}<\infty.
}
Being a $L^2$-limit of Gaussian variables for every $t$,  $\XX(t)$ is a Gaussian process. 

Now for any $0\leq t\leq s< 1$, we compute
\ali{
\EE[\XX(t)\XX(s)]&=\sum_{m>0}\frac{\sigma^2}{2\pi^2m^2}(\cos(2\pi mt)\cos(2\pi ms)+\sin(2\pi mt)\sin(2\pi ms))\\
&=\sum_{m>0}\frac{\sigma^2\cos(2\pi m(s-t))}{2\pi^2m^2}\\
&=-\frac{\sigma^2}{2}(s-t)\LR{1-(s-t)}+\frac{\sigma^2}{12}.
} 
Since $\XX(t)-\XX(s)$ is a centered Gaussian, we have
\ali{
\EE[|\XX(t)-\XX(s)|^4]&=3\EE[|\XX(t)-\XX(s)|^2]^2\\
&=3\sigma^4\LR{\frac{1}{6}-\frac{1}{6}+(s-t)\LR{1-(s-t)}}\\
&=3\sigma^4(s-t)^2\LR{1-(s-t)}^2\\
&\leq 3\sigma^4(s-t)^2.
}

Using Proposition \ref{Kolmogorov’s Lemma}, we find a continuous modification of the process $\XX(t)$. Similar things hold for $\PP(t)$.

\end{proof}

Consider  Gaussian free fields $\XX(t),\PP(t)$ on $S^1$ defined in  Proposition \ref{propGFF}. Motivated by the Fourier expansion, we define the  topological action by 
\ali{
\int_{S^1} \PP d\XX
:=-\sum_{m> 0} \frac{1}{2\pi m} (X_mQ_m-Y_mP_m).
}
The sum converges in $L^2$ since
\ali{
\sum_{m> 0} \EE\Lr{\LR{\frac{1}{2\pi m} (X_mQ_m-Y_mP_m)}^2}=\sum_{m> 0} \frac{\sigma^4}{2\pi^2m^2}
<\infty.
}
By Proposition  \ref{Levy lem},  it also converges almost surely.

\begin{prop}\label{2.4} In the limit $N\to \infty$, 
\ali{
\exp\LR{-\frac{i}{\hbar}\sum_{m=1}^{N} \frac{1}{2\pi m} (X_mQ_m-Y_mP_m)}
\rightarrow \exp\LR{\frac{i}{\hbar} \int_{S^1} \PP d\XX}
\text{\ \ in $L^2$}.
}
\end{prop}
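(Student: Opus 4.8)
The plan is to deduce this from the $L^2$ (equivalently, almost sure) convergence of the series $\sum_{m>0}\frac{1}{2\pi m}(X_mQ_m-Y_mP_m)$ to $\int_{S^1}\PP d\XX$, which has already been established just above the statement. Write $S_N=\sum_{m=1}^N\frac{1}{2\pi m}(X_mQ_m-Y_mP_m)$ and $S=\int_{S^1}\PP d\XX$, so that $S_N\to S$ in $L^2$. The quantities whose convergence we want are $\exp(\tfrac{i}{\hbar}S_N)$ and $\exp(\tfrac{i}{\hbar}S)$, each of modulus $1$; the point is that post-composing with the bounded Lipschitz map $x\mapsto e^{ix/\hbar}$ turns the $L^2$ convergence of the exponents into $L^2$ convergence of the exponentials.

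The one elementary input is the inequality $|e^{ia}-e^{ib}|\le|a-b|$ for $a,b\in\R$, which follows from $|e^{ia}-e^{ib}|^2=2-2\cos(a-b)=4\sin^2\!\big(\tfrac{a-b}{2}\big)\le(a-b)^2$. Applying this pointwise with $a=S_N/\hbar$ and $b=S/\hbar$ gives
\[
\EE\Big[\,\big|\exp(\tfrac{i}{\hbar}S_N)-\exp(\tfrac{i}{\hbar}S)\big|^2\,\Big]\ \le\ \frac{1}{\hbar^2}\,\EE\big[(S_N-S)^2\big],
\]
and the right-hand side tends to $0$ as $N\to\infty$ by the $L^2$ convergence of the series defining $\int_{S^1}\PP d\XX$. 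This is exactly the asserted convergence.

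As an alternative one can argue by dominated convergence: $S_N\to S$ almost surely (it is a partial sum of an a.s.-convergent series), hence $\exp(\tfrac{i}{\hbar}S_N)\to\exp(\tfrac{i}{\hbar}S)$ almost surely by continuity, and since $|\exp(\tfrac{i}{\hbar}S_N)-\exp(\tfrac{i}{\hbar}S)|^2\le 4$ is integrable over the probability space, $L^2$ convergence follows. I do not expect a genuine obstacle here: no new probabilistic machinery beyond what already justified the definition of $\int_{S^1}\PP d\XX$ (cf. Proposition \ref{Levy lem}) is needed. The only thing to get right is the trivial observation that the exponentials are uniformly bounded, which is what lets the existing $L^2$ (or almost sure) convergence of the exponents pass to the exponentials.
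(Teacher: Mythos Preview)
Your proof is correct and is essentially the same as the paper's: both use the Lipschitz bound for $x\mapsto e^{ix}$ to pass from $L^2$ convergence of $T_N\to T$ to $L^2$ convergence of $e^{iT_N}\to e^{iT}$. The paper splits $|e^{iT_N}-e^{iT}|^2$ into sine and cosine parts and bounds each by $(T_N-T)^2$, while you use the equivalent bound $|e^{ia}-e^{ib}|\le|a-b|$ directly; the arguments are otherwise identical.
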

\begin{proof}
Let $T_N=-\frac{1}{\hbar}\sum\limits_{m=1}^{N} \frac{1}{2\pi m} (X_mQ_m-Y_mP_m)$, $T=\frac{1}{\hbar}\int_{S^1} \PP d\XX$.  
We have
\ali{
\EE\Lr{\left|\exp\LR{iT_N}-\exp\LR{iT}\right|^2}
&=\EE\Lr{\LR{\sin{T_N}-\sin{T}}^2+\LR{\cos{T_N}-\cos{T}}^2} \\
&\leq\EE\Lr{\LR{T_N-T}^2+\LR{T_N-T}^2}
\rightarrow 0.
}
\end{proof}
\begin{prop}
$$
\EE\Lr{e^{\frac{i}{\hbar} \int_{S^1} \PP d\XX}}=\frac{\frac{\sigma^2}{2\hbar}}{\sinh\LR{\frac{\sigma^2}{2\hbar}}}.
$$
\end{prop}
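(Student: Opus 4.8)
The plan is to exploit the independence built into the definition of $\int_{S^1}\PP d\XX$ so that the expectation factorizes over the Fourier modes, evaluate the resulting one-mode Gaussian integrals explicitly, and recognize the infinite product as Euler's product expansion of $\sinh$. First I would reduce to a finite-mode computation: by Proposition \ref{2.4}, $e^{\frac{i}{\hbar}\sum_{m=1}^N\frac{1}{2\pi m}(X_mQ_m-Y_mP_m)}\to e^{\frac{i}{\hbar}\int_{S^1}\PP d\XX}$ in $L^2$, and since these random variables are uniformly bounded by $1$ in modulus, their expectations converge. Hence
\ali{
\EE\Lr{e^{\frac{i}{\hbar}\int_{S^1}\PP d\XX}}=\lim_{N\to\infty}\prod_{m=1}^{N}\EE\Lr{e^{\frac{i}{2\pi m\hbar}(X_mQ_m-Y_mP_m)}},
}
where the product form uses that the quadruples $(X_m,Y_m,P_m,Q_m)$ are independent in $m$.

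Next I would compute the single-mode factor. Fix $m$ and set $\lambda=\tfrac{1}{2\pi m\hbar}$. Since $(X_m,Q_m)$ and $(Y_m,P_m)$ are independent, $\EE[e^{i\lambda(X_mQ_m-Y_mP_m)}]=\EE[e^{i\lambda X_mQ_m}]\,\EE[e^{-i\lambda Y_mP_m}]$, and replacing $Y_m$ by $-Y_m$ (which preserves its law) shows the second factor equals the first, so the factor is $\big(\EE[e^{i\lambda X_mQ_m}]\big)^2$. To evaluate $\EE[e^{i\lambda X_mQ_m}]$ I would condition on $Q_m$: the Gaussian characteristic function gives $\EE[e^{i\lambda X_mQ_m}\mid Q_m]=e^{-\frac12\lambda^2\sigma^2Q_m^2}$, and then the elementary Gaussian integral $\EE[e^{-\frac12 a Q_m^2}]=(1+a\sigma^2)^{-1/2}$ with $a=\lambda^2\sigma^2$ yields $\EE[e^{i\lambda X_mQ_m}]=(1+\lambda^2\sigma^4)^{-1/2}$. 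Therefore the $m$-th factor is $(1+\lambda^2\sigma^4)^{-1}=\big(1+\tfrac{\sigma^4}{4\pi^2 m^2\hbar^2}\big)^{-1}$.

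Writing $a=\tfrac{\sigma^2}{2\pi\hbar}$ so that $\tfrac{\sigma^4}{4\pi^2 m^2\hbar^2}=\tfrac{a^2}{m^2}$, the statement reduces to
\ali{
\prod_{m\ge 1}\frac{1}{1+a^2/m^2}=\frac{\pi a}{\sinh(\pi a)},
}
which is exactly Euler's product formula $\sinh(\pi a)=\pi a\prod_{m\ge1}(1+a^2/m^2)$; substituting $\pi a=\tfrac{\sigma^2}{2\hbar}$ gives the claimed value. I do not expect any serious obstacle here: the only point requiring a word of care is the passage from the $L^2$-convergence of Proposition \ref{2.4} to convergence of expectations of the exponentials, and this is immediate from boundedness. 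The computation is otherwise a chain of standard Gaussian integrals together with the classical infinite-product identity for $\sinh$.
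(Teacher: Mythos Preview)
Your proof is correct and follows essentially the same route as the paper: reduce to finite modes via Proposition \ref{2.4}, factor over independent $m$, compute the single-mode Gaussian integral to obtain $\prod_{m>0}\bigl(1+(\sigma^2/2\pi m\hbar)^2\bigr)^{-1}$, and finish with Euler's product for $\sinh$. The only cosmetic difference is that the paper evaluates the two-variable Gaussian integral directly, whereas you do it by conditioning on one variable; your remark about boundedness justifying convergence of expectations is fine (though $L^2$ convergence already gives this).
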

\begin{proof}
By Proposition \ref{2.4}, we have
\ali{
 \EE\Lr{e ^{\frac{i}{\hbar} \int_{S^1} \PP d\XX}}
=&\lim_{N\rightarrow\infty}\EE\Lr{\exp\LR{-\frac{i}{\hbar}\sum\limits_{m=1}^{N} \frac{1}{2\pi m} (X_mQ_m-Y_mP_m)}} \\
=&\prod_{m>0} \EE\Lr{\exp\LR{-\frac{i}{\hbar} \frac{1}{2\pi m} (X_mQ_m-Y_mP_m)}} \\
=&\prod_{m>0} 
  \int dxdq \, \frac{1}{2\pi \sigma^2}e^{-\frac{x^2+q^2}{2\sigma^2}-\frac{i}{2\pi m\hbar}xq}  
  \int dydp \, \frac{1}{2\pi \sigma^2}e^{-\frac{y^2+p^2}{2\sigma^2}+\frac{i}{2\pi m\hbar}yp}  \\
=&\prod_{m>0} \frac{1}{1+\LR{\frac{\sigma^2}{2\pi m\hbar}}^{2}}
=\frac{\frac{\sigma^2}{2\hbar}}{\sinh\LR{\frac{\sigma^2}{2\hbar}}}.
}
\end{proof}

The above construction extends straightforwardly to the vector-valued fields (with independent components)
$$
\XX=(\XX^1,\cdots,\XX^r),\qquad \PP=(\PP_1,\cdots,\PP_r)
$$ 
and topological action   
$$
\int_{S^1}\sum_{i=1}^r \PP_i d\XX^i.
$$
This can be viewed as topological quantum mechanical model on the phase space $\R^{2r}$. 

\subsection{Topological Correlations via Large Variance Limit}\label{sec:Topological-correlation}
In this section, we define and study topological correlations in our topological quantum mechanical model via large variance limit. 

Let $\R^{2r}$ be the standard phase space with coordinates 
$$
(\xx,\xp)=(x^1,\cdots,x^r,p_1,\cdots,p_r)
$$
and 
$$(\XX,\PP)=(\XX^1,\cdots,\XX^r,\PP_1,\cdots,\PP_r)$$ 
be  vector-valued Gaussian free fields. Given a function $f(\xx,\xp)$ on $\R^{2r}$, we associate the following process on $S^1$ (here $t\in S^1$)
$$
\O_f(t) := f(\XX(t),\PP(t)), \qquad \sO_{f}(t;\xx,\xp) 
:= f(\xx+\XX(t),\xp+\PP(t)).
$$ 

For simplicity, we will often write $\sO_{f}(t)$ for $\sO_{f}(t;\xx,\xp)$ and the dependence on $(\xx,\xp)$ will be implicit in the context.  Thus $\sO_{f}(t) $ can be viewed as the random perturbation of $f$ at time $t$.

\begin{defn}[Topological Correlation]\label{defn-Top} 
Given functions $f_i$ on $\R^{2r}$, we define their topological correlations by the following large variance limit
\ali{
\lR{\OO_{f_0}(t_0)\OO_{f_1}(t_1)\cdots\OO_{f_n}(t_n)}
:=\lim_{\sigma\to\infty} \frac{\EE\Lr{\OO_{f_0}(t_0)\OO_{t_1}(t_1)\cdots\OO_{f_n}(t_n) e^{\frac{i}{\hbar} \int_{S^1} \sum\limits_{i=1}^r \PP_i d\XX^i}}}{\EE\Lr{e^{\frac{i}{\hbar} \int_{S^1} \sum\limits_{i=1}^r \PP_i d\XX^i}}}.
}
and
\ali{
\lR{\sO_{{f_0}}(t_0)\sO_{{f_1}}(t_1)\cdots\sO_{{f_n}}(t_n)}(\xx,\xp)
:=\lim_{\sigma\to\infty} \frac{\EE\Lr{\sO_{{f_0}}(t_0;\xx,\xp)\sO_{{f_1}}(t_1;\xx,\xp)\cdots\sO_{{f_n}}(t_n;\xx,\xp)  e^{\frac{i}{\hbar} \int_{S^1}\sum\limits_{i=1}^r \PP_i d\XX^i}}}{\EE\Lr{e^{\frac{i}{\hbar} \int_{S^1}\sum\limits_{i=1}^r \PP_i d\XX^i}}}.
}
\end{defn}

In the rest of this subsection, we show that the large variance limit of topological correlations exists for polynomial and Schwartz functions, and study their basic properties.

\subsubsection{Polynomials and Wick's Theorem} We first consider topological correlations for polynomial functions. It is enough to consider topological correlations of the form
$$
\lR{\XX^{i_1}(t_1)\XX^{i_2}(t_2)\cdots\XX^{i_u}(t_u)\PP_{j_1}(s_1)\PP_{j_2}(s_2)\cdots\PP_{j_v}(s_v)} .
$$
This can be computed exactly in terms of a version of Wick's Theorem. We assume $r=1$ and consider the phase space $\R^2$ for simplicity. The results generalize to $\R^{2r}$ straight-forwardly and we collect them at the end.

 Let us first define a two-variable function $G$ on $S^1\times S^1$ which is actually the Green's function representing inverse of $\frac{d}{dt}$ on $S^1$. Since every two points on $S^1$ can be rearranged to $t,s\in\R$ such that $0\leq s-t<1$, we only need to define $G$ for points $0\leq s-t<1$ which is given by
\ali{
G(t,s)
&=-\sum_{n> 0} \frac{1}{\pi n} \sin\LR{2\pi n(s-t)}
=\begin{cases}
s-t-\frac{1}{2} & 0<s-t<1 \\
0 & s-t=0
\end{cases}
}

\begin{figure}[h]
\begin{center}
\tikzset{every picture/.style={line width=0.75pt}} 

\begin{tikzpicture}[x=0.75pt,y=0.75pt,yscale=-1,xscale=1,scale=0.9]

\draw    (39.83,164) -- (558.17,164) ;
\draw [shift={(560.17,164)}, rotate = 180] [color={rgb, 255:red, 0; green, 0; blue, 0 }  ][line width=0.75]    (10.93,-3.29) .. controls (6.95,-1.4) and (3.31,-0.3) .. (0,0) .. controls (3.31,0.3) and (6.95,1.4) .. (10.93,3.29)   ;
\draw    (300,274) -- (300,56) ;
\draw [shift={(300,54)}, rotate = 90] [color={rgb, 255:red, 0; green, 0; blue, 0 }  ][line width=0.75]    (10.93,-3.29) .. controls (6.95,-1.4) and (3.31,-0.3) .. (0,0) .. controls (3.31,0.3) and (6.95,1.4) .. (10.93,3.29)   ;
\draw [color={rgb, 255:red, 0; green, 0; blue, 255 }  ,draw opacity=1 ]   (300,223) -- (418,105) ;
\draw  [color={rgb, 255:red, 0; green, 0; blue, 255 }  ,draw opacity=1 ][fill={rgb, 255:red, 255; green, 255; blue, 255 }  ,fill opacity=1 ] (295.5,223) .. controls (295.5,220.51) and (297.51,218.5) .. (300,218.5) .. controls (302.49,218.5) and (304.5,220.51) .. (304.5,223) .. controls (304.5,225.49) and (302.49,227.5) .. (300,227.5) .. controls (297.51,227.5) and (295.5,225.49) .. (295.5,223) -- cycle ;
\draw  [color={rgb, 255:red, 0; green, 0; blue, 255 }  ,draw opacity=1 ][fill={rgb, 255:red, 0; green, 0; blue, 255 }  ,fill opacity=1 ] (413.5,164) .. controls (413.5,161.51) and (415.51,159.5) .. (418,159.5) .. controls (420.49,159.5) and (422.5,161.51) .. (422.5,164) .. controls (422.5,166.49) and (420.49,168.5) .. (418,168.5) .. controls (415.51,168.5) and (413.5,166.49) .. (413.5,164) -- cycle ;
\draw [color={rgb, 255:red, 0; green, 0; blue, 255 }  ,draw opacity=1 ]   (418,223) -- (536,105) ;
\draw  [color={rgb, 255:red, 0; green, 0; blue, 255 }  ,draw opacity=1 ][fill={rgb, 255:red, 255; green, 255; blue, 255 }  ,fill opacity=1 ] (413.5,105) .. controls (413.5,102.51) and (415.51,100.5) .. (418,100.5) .. controls (420.49,100.5) and (422.5,102.51) .. (422.5,105) .. controls (422.5,107.49) and (420.49,109.5) .. (418,109.5) .. controls (415.51,109.5) and (413.5,107.49) .. (413.5,105) -- cycle ;
\draw  [color={rgb, 255:red, 0; green, 0; blue, 255 }  ,draw opacity=1 ][fill={rgb, 255:red, 255; green, 255; blue, 255 }  ,fill opacity=1 ] (531.5,105) .. controls (531.5,102.51) and (533.51,100.5) .. (536,100.5) .. controls (538.49,100.5) and (540.5,102.51) .. (540.5,105) .. controls (540.5,107.49) and (538.49,109.5) .. (536,109.5) .. controls (533.51,109.5) and (531.5,107.49) .. (531.5,105) -- cycle ;
\draw  [color={rgb, 255:red, 0; green, 0; blue, 255 }  ,draw opacity=1 ][fill={rgb, 255:red, 255; green, 255; blue, 255 }  ,fill opacity=1 ] (413.5,223) .. controls (413.5,220.51) and (415.51,218.5) .. (418,218.5) .. controls (420.49,218.5) and (422.5,220.51) .. (422.5,223) .. controls (422.5,225.49) and (420.49,227.5) .. (418,227.5) .. controls (415.51,227.5) and (413.5,225.49) .. (413.5,223) -- cycle ;
\draw  [color={rgb, 255:red, 0; green, 0; blue, 255 }  ,draw opacity=1 ][fill={rgb, 255:red, 0; green, 0; blue, 255 }  ,fill opacity=1 ] (531.5,164) .. controls (531.5,161.51) and (533.51,159.5) .. (536,159.5) .. controls (538.49,159.5) and (540.5,161.51) .. (540.5,164) .. controls (540.5,166.49) and (538.49,168.5) .. (536,168.5) .. controls (533.51,168.5) and (531.5,166.49) .. (531.5,164) -- cycle ;
\draw  [color={rgb, 255:red, 0; green, 0; blue, 255 }  ,draw opacity=1 ][fill={rgb, 255:red, 0; green, 0; blue, 255 }  ,fill opacity=1 ] (295.5,164) .. controls (295.5,161.51) and (297.51,159.5) .. (300,159.5) .. controls (302.49,159.5) and (304.5,161.51) .. (304.5,164) .. controls (304.5,166.49) and (302.49,168.5) .. (300,168.5) .. controls (297.51,168.5) and (295.5,166.49) .. (295.5,164) -- cycle ;
\draw [color={rgb, 255:red, 0; green, 0; blue, 255 }  ,draw opacity=1 ]   (182,223) -- (300,105) ;
\draw  [color={rgb, 255:red, 0; green, 0; blue, 255 }  ,draw opacity=1 ][fill={rgb, 255:red, 0; green, 0; blue, 255 }  ,fill opacity=1 ] (177.5,164) .. controls (177.5,161.51) and (179.51,159.5) .. (182,159.5) .. controls (184.49,159.5) and (186.5,161.51) .. (186.5,164) .. controls (186.5,166.49) and (184.49,168.5) .. (182,168.5) .. controls (179.51,168.5) and (177.5,166.49) .. (177.5,164) -- cycle ;
\draw  [color={rgb, 255:red, 0; green, 0; blue, 255 }  ,draw opacity=1 ][fill={rgb, 255:red, 255; green, 255; blue, 255 }  ,fill opacity=1 ] (295.5,105) .. controls (295.5,102.51) and (297.51,100.5) .. (300,100.5) .. controls (302.49,100.5) and (304.5,102.51) .. (304.5,105) .. controls (304.5,107.49) and (302.49,109.5) .. (300,109.5) .. controls (297.51,109.5) and (295.5,107.49) .. (295.5,105) -- cycle ;
\draw [color={rgb, 255:red, 0; green, 0; blue, 255 }  ,draw opacity=1 ]   (64,223) -- (182,105) ;
\draw  [color={rgb, 255:red, 0; green, 0; blue, 255 }  ,draw opacity=1 ][fill={rgb, 255:red, 255; green, 255; blue, 255 }  ,fill opacity=1 ] (177.5,105) .. controls (177.5,102.51) and (179.51,100.5) .. (182,100.5) .. controls (184.49,100.5) and (186.5,102.51) .. (186.5,105) .. controls (186.5,107.49) and (184.49,109.5) .. (182,109.5) .. controls (179.51,109.5) and (177.5,107.49) .. (177.5,105) -- cycle ;
\draw  [color={rgb, 255:red, 0; green, 0; blue, 255 }  ,draw opacity=1 ][fill={rgb, 255:red, 255; green, 255; blue, 255 }  ,fill opacity=1 ] (177.5,223) .. controls (177.5,220.51) and (179.51,218.5) .. (182,218.5) .. controls (184.49,218.5) and (186.5,220.51) .. (186.5,223) .. controls (186.5,225.49) and (184.49,227.5) .. (182,227.5) .. controls (179.51,227.5) and (177.5,225.49) .. (177.5,223) -- cycle ;
\draw  [color={rgb, 255:red, 0; green, 0; blue, 255 }  ,draw opacity=1 ][fill={rgb, 255:red, 0; green, 0; blue, 255 }  ,fill opacity=1 ] (59.5,164) .. controls (59.5,161.51) and (61.51,159.5) .. (64,159.5) .. controls (66.49,159.5) and (68.5,161.51) .. (68.5,164) .. controls (68.5,166.49) and (66.49,168.5) .. (64,168.5) .. controls (61.51,168.5) and (59.5,166.49) .. (59.5,164) -- cycle ;
\draw  [color={rgb, 255:red, 0; green, 0; blue, 255 }  ,draw opacity=1 ][fill={rgb, 255:red, 255; green, 255; blue, 255 }  ,fill opacity=1 ] (59.5,223) .. controls (59.5,220.51) and (61.51,218.5) .. (64,218.5) .. controls (66.49,218.5) and (68.5,220.51) .. (68.5,223) .. controls (68.5,225.49) and (66.49,227.5) .. (64,227.5) .. controls (61.51,227.5) and (59.5,225.49) .. (59.5,223) -- cycle ;
\draw [color={rgb, 255:red, 0; green, 0; blue, 255 }  ,draw opacity=1 ]   (536,223) -- (560.17,198.83) ;
\draw [color={rgb, 255:red, 0; green, 0; blue, 255 }  ,draw opacity=1 ]   (39.83,129.17) -- (64,105) ;
\draw  [color={rgb, 255:red, 0; green, 0; blue, 255 }  ,draw opacity=1 ][fill={rgb, 255:red, 255; green, 255; blue, 255 }  ,fill opacity=1 ] (59.5,105) .. controls (59.5,102.51) and (61.51,100.5) .. (64,100.5) .. controls (66.49,100.5) and (68.5,102.51) .. (68.5,105) .. controls (68.5,107.49) and (66.49,109.5) .. (64,109.5) .. controls (61.51,109.5) and (59.5,107.49) .. (59.5,105) -- cycle ;
\draw  [color={rgb, 255:red, 0; green, 0; blue, 255 }  ,draw opacity=1 ][fill={rgb, 255:red, 255; green, 255; blue, 255 }  ,fill opacity=1 ] (531.5,223) .. controls (531.5,220.51) and (533.51,218.5) .. (536,218.5) .. controls (538.49,218.5) and (540.5,220.51) .. (540.5,223) .. controls (540.5,225.49) and (538.49,227.5) .. (536,227.5) .. controls (533.51,227.5) and (531.5,225.49) .. (531.5,223) -- cycle ;

\draw (282,38) node [anchor=north west][inner sep=0.75pt]   [align=left] {$G(t,s)$};
\draw (560,164) node [anchor=north west][inner sep=0.75pt]   [align=left] {$s-t$};
\draw (302,167) node [anchor=north west][inner sep=0.75pt]   [align=left] {$0$};
\draw (355.5,167) node [anchor=north west][inner sep=0.75pt]   [align=left] {$\frac{1}{2}$};
\draw (420,167) node [anchor=north west][inner sep=0.75pt]   [align=left] {$1$};
\draw (240,167) node [anchor=north west][inner sep=0.75pt]   [align=left] {$-\frac{1}{2}$};
\draw (184,167) node [anchor=north west][inner sep=0.75pt]   [align=left] {$-1$};

\end{tikzpicture}
\end{center}
\caption{The value of $G(t,s)$ as a periodic function of $s-t$}
\end{figure}
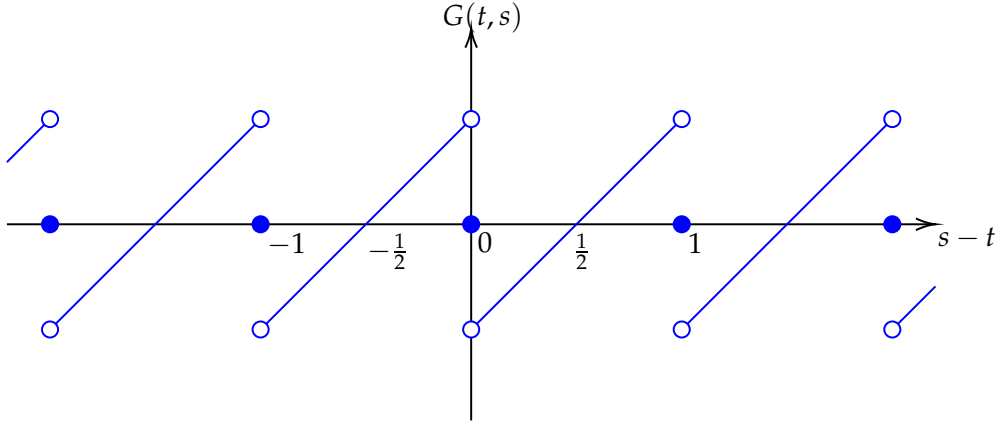

\begin{lem}\label{lem-generating}
For any $J=(J_1, J_2,\cdots, J_u)^T, K=(K_1, K_2,\cdots, K_v)^T$ and $0\leq t_1, t_2,\cdots, t_u, s_1, s_2,\cdots, s_v< 1$, denote $Q=(Q_{kj})$ to be the $u\times v$ matrix with $Q_{kj}=i\hbar  G(t_k,s_j)$. Then we have the topological correlation
\ali{
\lR{e^{\sum\limits_{k=1}^u\XX(t_k)J_k+\sum\limits_{j=1}^v\PP(s_j)K_j}}=e^{J^TQK}.
}
\end{lem}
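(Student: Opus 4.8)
The plan is to push the whole computation down to the i.i.d.\ Gaussian coefficients $\{X_m,Y_m,P_m,Q_m\}_{m\ge1}$, where independence across $m$ makes everything factorize. Substituting the series of Proposition \ref{propGFF}, the linear exponent becomes
\[
\sum_{k=1}^u \XX(t_k)J_k + \sum_{j=1}^v \PP(s_j)K_j \;=\; \sum_{m\ge1}\bigl(a_mX_m + b_mY_m + c_mP_m + d_mQ_m\bigr),
\]
with $a_m=\tfrac{1}{\sqrt2\pi m}\sum_k J_k\cos(2\pi m t_k)$, $b_m=\tfrac{1}{\sqrt2\pi m}\sum_k J_k\sin(2\pi m t_k)$ and $c_m,d_m$ the analogous sums in $K_j,s_j$. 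Combined with $\tfrac i\hbar\int_{S^1}\PP d\XX=\sum_m i\lambda_m(X_mQ_m-Y_mP_m)$, $\lambda_m:=\tfrac{1}{2\pi m\hbar}$, the integrand in the numerator of Definition \ref{defn-Top} is the exponential of a sum over $m$ of terms depending only on the $m$-th quadruple. By independence this expectation factorizes over $m$; the interchange with the infinite product is justified by dominated convergence, using $\bigl|e^{\sum\XX J+\sum\PP K}e^{\frac i\hbar\int\PP d\XX}\bigr| = e^{\sum\XX J+\sum\PP K}$ together with a uniform exponential-moment bound for the partial sums of the a.s.\ convergent Gaussian series $\sum_m(a_mX_m+\cdots)$ (a Lévy-type maximal inequality). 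The denominator factorizes as $\prod_m\tfrac1{1+(\sigma^2/2\pi m\hbar)^2}$, as computed above.

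Inside the $m$-th factor the pairs $(X_m,Q_m)$ and $(Y_m,P_m)$ decouple, each a complex Gaussian integral of the form (for $X,Q$ i.i.d.\ $N(0,\sigma^2)$)
\[
\EE\bigl[e^{\alpha X+\beta Q+i\lambda XQ}\bigr]=\frac{1}{\sqrt{1+\sigma^4\lambda^2}}\exp\Bigl(\frac{\sigma^2(\alpha^2+\beta^2)}{2(1+\sigma^4\lambda^2)}+\frac{i\sigma^4\lambda\,\alpha\beta}{1+\sigma^4\lambda^2}\Bigr),
\]
legitimate since the relevant complex quadratic form has positive-definite real part. The square-root prefactors multiply to $\prod_m\tfrac1{1+\sigma^4\lambda_m^2}$, which cancels the denominator exactly; so for each finite $\sigma$ the ratio in Definition \ref{defn-Top} equals $\exp\bigl[\sum_m\bigl(\tfrac{\sigma^2(a_m^2+b_m^2+c_m^2+d_m^2)}{2(1+\sigma^4\lambda_m^2)}+\tfrac{i\sigma^4\lambda_m(a_md_m-b_mc_m)}{1+\sigma^4\lambda_m^2}\bigr)\bigr]$.

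Now let $\sigma\to\infty$ in the exponent. For the ``cross'' part one has $\tfrac{\sigma^4\lambda_m}{1+\sigma^4\lambda_m^2}\to\tfrac1{\lambda_m}$, so it tends to $\tfrac{i}{\lambda_m}(a_md_m-b_mc_m)=\tfrac{i\hbar}{\pi m}\sum_{k,j}J_kK_j\sin\bigl(2\pi m(s_j-t_k)\bigr)$; summing over $m$ and recognizing the defining series $G(t,s)=\sum_n\tfrac{i}{\pi n}\sin(2\pi n(s-t))$ produces exactly $\hbar\sum_{k,j}J_kK_j\,G(t_k,s_j)=J^TQK$. The remaining ``diagonal'' part $\sum_m\tfrac{\sigma^2(a_m^2+b_m^2+c_m^2+d_m^2)}{2(1+\sigma^4\lambda_m^2)}$ is, after expanding the squares, a combination of sums of the shape $\tfrac{\sigma^2}{4\pi^2}\sum_m\tfrac{\cos(2\pi m\theta)}{m^2+R^2}$ with $R:=\sigma^2/2\pi\hbar$ and $\theta$ a difference of the marked times; its $\sigma\to\infty$ behaviour is extracted by the same device and completes the identification of the generating function.

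The main obstacle is precisely this passage to the limit: the Fourier sums above are not absolutely convergent uniformly in $\sigma$ --- after the sharp bound $\tfrac{\sigma^4\lambda_m}{1+\sigma^4\lambda_m^2}\le\tfrac1{\lambda_m}$, the $m$-th cross summand is only $O(\hbar/m)$ --- so one cannot pass to the limit term by term. The real work is a Tauberian/Abel-summation argument: write $\tfrac{\sigma^4\lambda_m}{1+\sigma^4\lambda_m^2}=\tfrac1{\lambda_m}\cdot\tfrac{R^2}{R^2+m^2}$, note that $m\mapsto\tfrac{R^2}{R^2+m^2}$ is a monotone regulariser with pointwise limit $1$, and combine this with the uniform boundedness of the partial sums $\sum_{m\le M}\sin(2\pi m\theta)$ (for $\theta$ a fixed nonzero difference of marked points) through summation by parts, so that the regularised sums converge to the conditionally convergent sawtooth series defining $G$; the same mechanism controls the diagonal sums.
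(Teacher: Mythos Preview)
Your overall strategy coincides with the paper's: push everything down to the Fourier coefficients $X_m,Y_m,P_m,Q_m$, use independence to factorize over $m$, evaluate the two--dimensional Gaussian integrals in each factor, cancel the resulting normalisation against the denominator, and then let $\sigma\to\infty$. Your closed-form expression for $\EE\bigl[e^{\alpha X+\beta Q+i\lambda XQ}\bigr]$ is a bit cleaner than the paper's completion-of-the-square via the auxiliary quantities $A_m,C_m$, but the content is identical. You are in fact more scrupulous than the paper about the passage $\sum_m\leftrightarrow\lim_{\sigma\to\infty}$ for the cross term: the paper just asserts the remainder is $O(\sigma^{-1})$ without any uniformity in $m$, whereas you correctly recognise the cross sum as an Abel-regularised sawtooth series and indicate the summation-by-parts argument that makes this rigorous.

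The genuine gap is your handling of the \emph{diagonal} part. You say only that ``the same mechanism controls the diagonal sums'' and that this ``completes the identification'', without computing the limit. If you actually carry it out you will find it does \emph{not} vanish: with $R=\sigma^2/(2\pi\hbar)$,
\[
\sum_{m\ge1}\frac{\sigma^2(a_m^2+b_m^2)}{2(1+\sigma^4\lambda_m^2)}
\;=\;\frac{\sigma^2}{4\pi^2}\sum_{k,l}J_kJ_l\sum_{m\ge1}\frac{\cos\bigl(2\pi m(t_k-t_l)\bigr)}{m^2+R^2},
\]
and while the off-diagonal pieces ($t_k\ne t_l$) decay like $e^{-cR}$, the $k=l$ piece equals $\tfrac{\sigma^2}{4\pi^2}\sum_{m\ge1}\tfrac{1}{m^2+R^2}\sim\tfrac{\sigma^2}{4\pi^2}\cdot\tfrac{\pi}{2R}=\tfrac{\hbar}{4}$. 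Hence the ratio tends to $\exp\bigl(J^{T}QK+\tfrac{\hbar}{4}(\lVert J\rVert^2+\lVert K\rVert^2)\bigr)$ rather than $e^{J^{T}QK}$; in particular $\langle\XX(t)^2\rangle=\hbar/2\neq0$. The paper's own argument shares this lacuna---its termwise $O(\sigma^{-1})$ does not survive the sum over $m$---so the defect is not specific to your approach, but your sketch does not cure it either.
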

\begin{proof}
By definition
\ali{
\lR{e^{\sum\limits_{k=1}^u\XX(t_k)J_k+\sum\limits_{j=1}^v\PP(s_j)K_j}}=\lim_{\sigma\rightarrow\infty}
\frac{\EE\Lr{e^{\sum\limits_{k=1}^u\XX(t_k)J_k+\sum\limits_{j=1}^v\PP(s_j)K_j+\frac{i}{\hbar}\int_{S^1} \PP d\XX}}}
{\EE\Lr{e^{\frac{i}{\hbar}\int_{S^1} \PP d\XX}}}.
}
For any $k,l\geq0$, $t,t_1,\cdots t_k,s_1,\cdots s_l\in[0,1]$, since
\ali{
T_N:=&\exp\LR{-\frac{i}{\hbar}\sum_{m=1}^{N} \frac{1}{2\pi m} (X_mQ_m-Y_mP_m)}
\rightarrow \exp\LR{\frac{i}{\hbar} \int_{S^1}\PP d\XX}
\text{\ \ in $L^2$}
\\
V_N(t):=&\sum_{m=1}^{N} \frac{1}{\sqrt 2\pi m} \LR{\cos\LR{2\pi mt} X_m + \sin\LR{2\pi mt} Y_m}
\rightarrow \XX(t)
\text{\ \ in any $L^p$}
\\
W_N(t):=&\sum_{m=1}^{N} \frac{1}{\sqrt 2\pi m} \LR{\cos\LR{2\pi mt} P_m + \sin\LR{2\pi mt} Q_m}
\rightarrow \PP(t)
\text{\ \ in any $L^p$}
}
we have
\ali{
\prod_{a=1}^{k}V_N(t_a)
\prod_{b=1}^{l}W_N(t_b)
T_N
\rightarrow \prod_{a=1}^{k}\XX(t_a)\prod_{b=1}^{l}\PP(t_b)\exp\LR{\frac{i}{\hbar} \int_{S^1}\PP d\XX}
\text{\ \ in $L^1$}.
}
In particular, the expectation converges. Now we can calculate
\ali{
&\EE\Lr{e^{\sum\limits_{k=1}^u\XX(t_k)J_k+\sum\limits_{j=1}^v\PP(s_j)K_j+\frac{i}{\hbar}\int_{S^1} \PP d\XX}}\\
=&\EE\Lr{\prod_{m>0}e^{\LR{\sum\limits_{k=1}^u\frac{\cos(2\pi m t_k)}{\sqrt2 \pi m}J_k}X_m+\LR{\sum\limits_{k=1}^u\frac{\sin(2\pi m t_k)}{\sqrt2 \pi m}J_k}Y_m+\LR{\sum\limits_{j=1}^v\frac{\sin(2\pi m s_j)}{\sqrt2 \pi m}K_j}Q_m+\LR{\sum\limits_{j=1}^v\frac{\cos(2\pi m s_j)}{\sqrt2 \pi m}K_j}P_m-\frac{i}{2\pi m\hbar}(X_mQ_m-Y_mP_m)}}\\
=&\prod_{m>0}\EE\Lr{e^{ \LR{\sum\limits_{k=1}^u\frac{\sin(2\pi m t_k)}{\sqrt2 \pi m}J_k}Y_m +\LR{\sum\limits_{j=1}^v\frac{\cos(2\pi m s_j)}{\sqrt2 \pi m}K_j}P_m+\frac{i}{2\pi m\hbar}Y_mP_m}}\EE\Lr{e^{\LR{\sum\limits_{k=1}^u\frac{\cos(2\pi m t_k)}{\sqrt2 \pi m}J_k}X_m+\LR{\sum\limits_{j=1}^v\frac{\cos(2\pi m s_j)}{\sqrt2 \pi m}K_j}P_m-\frac{i}{2\pi m\hbar}X_mQ_m}}.
}

Let us compute $\EE\Lr{e^{ \LR{\sum\limits_{k=1}^u\frac{\sin(2\pi m t_k)}{\sqrt2 \pi m}J_k}Y_m +\LR{\sum\limits_{j=1}^v\frac{\cos(2\pi m s_j)}{\sqrt2 \pi m}K_j}P_m+\frac{i}{2\pi m\hbar}Y_mP_m}}$ first. Denote 
$$
a_m=\sum_{k=1}^u\frac{\sin(2\pi m t_k)}{\sqrt2 \pi m}J_k,\ 
c_m=\sum_{j=1}^v\frac{\cos(2\pi m s_j)}{\sqrt2 \pi m}K_j 
$$
and
$$A_m=-\frac{1}{\frac{1}{\sigma^4}+\frac{1}{4\pi^2m^2\hbar^2}}\LR{\frac{1}{\sigma^2}a_m+\frac{i}{2\pi m\hbar}c_m},\ C_m=-\frac{1}{\frac{1}{\sigma^4}+\frac{1}{4\pi^2m^2\hbar^2}}\LR{\frac{1}{\sigma^2}c_m+\frac{i}{2\pi m\hbar}a_m} 
$$
which satisfy
$$
\begin{cases}
        -\frac{1}{\sigma^2}A_m+\frac{i}{2\pi m\hbar}C_m=a_m \\
        \frac{i}{2\pi m\hbar}A_m-\frac{1}{\sigma^2}C_m=c_m \\ 
\end{cases}.
$$
Then  
\ali{
\EE\Lr{e^{a_mY_m+c_mP_m+\frac{i}{2\pi m\hbar}Y_mP_m}}
=&\int_{\R^2}dydp\,\frac{1}{2\pi\sigma^2}e^{-\frac{y^2+p^2}{2\sigma^2}+\frac{i}{2\pi m\hbar}yp+a_my+c_mp}\\
=&\int_{\R^2}dydp\,\frac{1}{2\pi\sigma^2}e^{-\frac{\LR{y+A_m}^2+\LR{p+C_m}^2}{2\sigma^2}+\frac{i}{2\pi m\hbar}(y+A_m)(p+C_m)+\frac{A_m^2+C_m^2}{2\sigma^2}-\frac{i}{2\pi m\hbar}A_mC_m}\\
=&e^{\frac{A_m^2+C_m^2}{2\sigma^2}-\frac{i}{2\pi m\hbar}A_mC_m}\int_{\R^2}dydp\,\frac{1}{2\pi\sigma^2}e^{-\frac{y^2+p^2}{2\sigma^2}+\frac{i}{2\pi m\hbar}yp}.
}
When $\sigma\rightarrow\infty$,
$$
\sum_{m>0}\frac{A_m^2+C_m^2}{2\sigma^2}-\frac{i}{2\pi m\hbar}A_mC_m
=-\sum_{m>0}\LR{4\pi^2m^2\hbar^2}^2\LR{\frac{i}{2\pi m\hbar}}^3a_mc_m
+O(\sigma^{-1}),
$$
so
\ali{
\prod_{m>0}e^{\frac{A_m^2+C_m^2}{2\sigma^2}-\frac{i}{2\pi m\hbar}A_mC_m}
&\rightarrow \exp\LR{-\sum_{m>0}\LR{4\pi^2m^2\hbar^2}^2\LR{\frac{i}{2\pi m\hbar}}^3a_mc_m}\\
&=\exp\LR{\sum_{m>0}\sum_{k=1}^u\sum_{j=1}^v\frac{i\hbar}{\pi m}\cos(2\pi mt_k)\sin(2\pi ms_j)J_kK_j}.
}

We can perform the same calculation for $X_m$ and $Q_m$. Since 
$$
\EE\Lr{e^{\frac{i}{\hbar}\int_{S^1} \PP d\XX}}=\prod_{m>0}\int_{\R^2}dxdq\,\frac{1}{2\pi\sigma^2}e^{-\frac{x^2+q^2}{2\sigma^2}-\frac{i}{2\pi m\hbar}xq}\int_{\R^2}dydp\,\frac{1}{2\pi\sigma^2}e^{-\frac{y^2+p^2}{2\sigma^2}+\frac{i}{2\pi m\hbar}yp},
$$
we find
\ali{
&\lR{e^{\sum\limits_{k=1}^u\XX(t_k)J_k+\sum\limits_{j=1}^v\PP(s_j)K_j}}\\
=&\exp\LR{\sum_{m>0}\sum_{k=1}^u\sum_{j=1}^v\frac{i\hbar}{\pi m}\LR{\cos(2\pi mt_k)\sin(2\pi ms_j)-\sin(2\pi mt_k)\cos(2\pi ms_j)}J_kK_j}\\
=&\exp\LR{\sum_{m>0}\sum_{k=1}^u\sum_{j=1}^v\frac{i\hbar}{\pi m}\sin(2\pi m(s_j-t_k))J_kK_j}\\
=&\exp\LR{\sum_{k=1}^u\sum_{j=1}^v Q_{kj}J_kK_j}.
}
\end{proof}

\begin{prop}[Wick's Theorem-$\R^2$]\label{prop-Wick}
For any $u+v$ points 
$t_1,t_2,\cdots,t_u,s_1,s_2,\cdots,s_v\in S^1$,
\ali{
&\quad \lR{\XX(t_1)\XX(t_2)\cdots\XX(t_u)\PP(s_1)\PP(s_2)\cdots\PP(s_v)} =\delta_{u,v}\LR{i\hbar}^u \sum_{\lambda\in S_u}\prod_{k=1}^u G(t_k,s_{\lambda(k)}).
}
\end{prop}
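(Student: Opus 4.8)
The plan is to obtain the statement by extracting a single coefficient from the generating function identity of Lemma~\ref{lem-generating}, equivalently by applying $\partial_{J_1}\cdots\partial_{J_u}\partial_{K_1}\cdots\partial_{K_v}$ to both sides and evaluating at $J=K=0$. The point is that, for each fixed $\sigma$, the normalized generating function $\EE\Lr{e^{\sum_k\XX(t_k)J_k+\sum_j\PP(s_j)K_j+\frac{i}{\hbar}\int_{S^1}\PP d\XX}}/\EE\Lr{e^{\frac{i}{\hbar}\int_{S^1}\PP d\XX}}$ is an honest exponential of a quadratic form in the finitely many variables $(J_1,\dots,J_u,K_1,\dots,K_v)$ — this is exactly what the product-over-$m$ Gaussian computation inside the proof of Lemma~\ref{lem-generating} exhibits — so its Taylor coefficient at the origin of the multilinear monomial $J_1\cdots J_u K_1\cdots K_v$ is defined, is a polynomial in the coefficients of that quadratic form, and (since those coefficients converge as $\sigma\to\infty$) passes to the large-variance limit. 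On the left-hand side this finite-$\sigma$ coefficient is the normalized moment $\EE\Lr{\XX(t_1)\cdots\XX(t_u)\PP(s_1)\cdots\PP(s_v)\,e^{\frac{i}{\hbar}\int_{S^1}\PP d\XX}}/\EE\Lr{e^{\frac{i}{\hbar}\int_{S^1}\PP d\XX}}$, whose $\sigma\to\infty$ limit is $\lR{\XX(t_1)\cdots\XX(t_u)\PP(s_1)\cdots\PP(s_v)}$ by Definition~\ref{defn-Top}. Hence it suffices to compute the coefficient of $J_1\cdots J_u K_1\cdots K_v$ in $e^{J^TQK}$.

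I would then do this combinatorially. Write $M=J^TQK=\sum_{k=1}^u\sum_{j=1}^v Q_{kj}J_kK_j$ and expand $e^M=\sum_{n\ge 0}M^n/n!$. Since every monomial of $M$ contains exactly one $J$ and exactly one $K$, the target monomial $J_1\cdots J_u K_1\cdots K_v$ (each variable to the first power) can only arise from $M^n$ with $n=u=v$, which accounts for the factor $\delta_{u,v}$. For $u=v$ and $n=u$, the monomial $J_1\cdots J_u K_1\cdots K_u$ is produced in $M^u$ precisely by the choices of a pair of permutations $(\pi,\rho)\in S_u\times S_u$ assigning the $i$-th factor the index pair $(\pi(i),\rho(i))$; grouping these by $\lambda=\rho\circ\pi^{-1}\in S_u$ gives $u!$ such pairs for each $\lambda$, each contributing $\prod_{k=1}^u Q_{k\lambda(k)}$, so the coefficient in $M^u$ equals $u!\sum_{\lambda\in S_u}\prod_k Q_{k\lambda(k)}$ and the $u!$ cancels the $1/n!$. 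Substituting $Q_{kj}=\hbar G(t_k,s_j)$ yields $\sum_{\lambda\in S_u}\prod_k\hbar G(t_k,s_{\lambda(k)})=\hbar^u\sum_{\lambda\in S_u}\prod_{k=1}^u G(t_k,s_{\lambda(k)})$, which is the asserted formula.

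The only step requiring any care is the first: justifying that coefficient extraction at the origin commutes with the large-variance limit. I expect this to be the main, though mild, obstacle, and I would dispatch it exactly as indicated above, using the explicit structure already produced in the proof of Lemma~\ref{lem-generating} — namely that the finite-$\sigma$ generating function is $\exp$ of a quadratic form whose coefficients tend to those of $J^TQK$. Everything else is elementary bookkeeping with Gaussian integrals and symmetric functions.
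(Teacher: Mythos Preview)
Your proposal is correct and follows essentially the same route as the paper: the paper's proof simply says to expand the exponential in Lemma~\ref{lem-generating} and compare coefficients of the $J_k$'s and $K_j$'s. Your write-up is in fact more detailed than the paper's, as you spell out both the combinatorics of extracting the multilinear coefficient from $e^{J^TQK}$ and the justification for interchanging coefficient extraction with the $\sigma\to\infty$ limit.
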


\begin{proof}
Expand the exponential in Lemma \ref{lem-generating}.  
Wick's Theorem follows by comparing the coefficients of the monomials consisting of $J_k$'s and $K_j$'s.
\end{proof}

\begin{cor}[Propagator]
The two-point correlations are 
\ali{
\lR{\XX(t_1)\PP(t_2)}
&=i\hbar G(t_1,t_2)   \\
\lR{\XX(t_1)\XX(t_2)}
&=0 \\
\lR{\PP(t_1)\PP(t_2)}
&=0 
}
These two-point correlations are called propagators, which are the building blocks in Wick's Theorem. 
\end{cor}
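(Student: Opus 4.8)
The plan is to obtain all three identities as immediate specializations of Wick's Theorem (Proposition \ref{prop-Wick}), so no new computation is needed beyond bookkeeping the values of $u$ and $v$.

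First I would treat $\lR{\XX(t_1)\PP(t_2)}$. This is the case $u=v=1$ of Proposition \ref{prop-Wick}, so $\delta_{u,v}=1$, the prefactor $\hbar^u$ equals $\hbar$, and the symmetric group $S_1$ is trivial; the single permutation $\lambda=\mathrm{id}$ contributes the factor $G(t_1,s_1)$ with $s_1=t_2$. Hence $\lR{\XX(t_1)\PP(t_2)}=\hbar G(t_1,t_2)$, which is the first claim.

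Next I would handle $\lR{\XX(t_1)\XX(t_2)}$ and $\lR{\PP(t_1)\PP(t_2)}$. The former corresponds to $u=2$, $v=0$ and the latter to $u=0$, $v=2$; in both cases the Kronecker delta $\delta_{u,v}$ in Proposition \ref{prop-Wick} vanishes, so each correlation is $0$. More conceptually, one can note that the generating function $e^{J^TQK}$ of Lemma \ref{lem-generating} is linear in each of $J$ and $K$ at the quadratic level and contains no pure $J^2$ or $K^2$ term, which is exactly what forces the $\XX\XX$ and $\PP\PP$ two-point functions to vanish.

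Since everything reduces to reading off low-order coefficients from an already-established formula, there is essentially no obstacle; the only point requiring a modicum of care is matching the index convention in Proposition \ref{prop-Wick} (the argument order $G(t_k,s_{\lambda(k)})$ and the definition of $G$ for $0\le s-t<1$) so that the first identity is stated with the correct sign and argument ordering.
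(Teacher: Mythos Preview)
Your proposal is correct and matches the paper's approach exactly: the corollary is stated without a separate proof because it is the immediate $u=v=1$ (resp.\ $u=2,v=0$ and $u=0,v=2$) specialization of Proposition~\ref{prop-Wick}, precisely as you describe. There is nothing to add.
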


The above results extend straightforwardly to the phase space $\R^{2r}$ and the vector-valued fields (with independent components)
$$
\XX=(\XX^1,\cdots,\XX^r),\qquad \PP=(\PP_1,\cdots,\PP_r).
$$ 
The non-trivial two-point topological correlations are
$$
\lR{\XX^k(t_1) \PP_j(t_2)}
= i\hbar G(t_1,t_2) \delta^k_j.
$$

\begin{prop}[Wick's Theorem-$\R^{2r}$]\label{prop-Wick-2}
For any $u+v$ points 
$t_1,t_2,\cdots,t_u,s_1,s_2,\cdots,s_v\in S^1$,
\ali{
&\quad \lR{\XX^{i_1}(t_1)\XX^{i_2}(t_2)\cdots\XX^{i_u}(t_u)\PP_{j_1}(s_1)\PP_{j_2}(s_2)\cdots\PP_{j_v}(s_v)} 
=\delta_{u,v}\LR{i\hbar}^u \sum_{\lambda\in S_u}\prod_{k=1}^u G(t_k,s_{\lambda(k)})\delta^{i_k}_{j_{\lambda(k)}}.
}
\end{prop}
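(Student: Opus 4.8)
The plan is to deduce this vector-valued Wick's Theorem from the scalar one (Proposition \ref{prop-Wick}) by exploiting that the components $(\XX^i,\PP_i)$, $i=1,\dots,r$, are mutually independent and that the action splits as $\frac{i}{\hbar}\int_{S^1}\sum_{i=1}^r\PP_i d\XX^i=\sum_{i=1}^r\frac{i}{\hbar}\int_{S^1}\PP_i d\XX^i$, a sum of $r$ independent copies of the scalar topological action. The first step is therefore a factorization lemma: for a random variable of the form $A=\prod_{c=1}^r A_c$ with each $A_c$ a polynomial in the $c$-th component fields $\XX^c,\PP_c$ only, independence gives $\EE\Lr{A\,e^{\frac{i}{\hbar}\int_{S^1}\sum_i\PP_i d\XX^i}}=\prod_{c=1}^r\EE\Lr{A_c\,e^{\frac{i}{\hbar}\int_{S^1}\PP_c d\XX^c}}$ and likewise without the insertions, and since $\EE\Lr{e^{\frac{i}{\hbar}\int_{S^1}\PP_c d\XX^c}}\ne 0$ for every $\sigma$, the normalized expectations multiply. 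Taking the $\sigma\to\infty$ limit of this finite product term by term yields $\lR{A}=\prod_{c=1}^r\lR{A_c}_c$, where $\lR{\cdot}_c$ is the scalar topological correlation of the $c$-th component. (The $L^1$-convergence making each $\lR{A_c}_c$ meaningful is exactly what was established inside the proof of Lemma \ref{lem-generating}.)

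Next I would group the insertions by component. For $c=1,\dots,r$ put $I_c=\{k:i_k=c\}$ and $J_c=\{l:j_l=c\}$, with $u_c=|I_c|$, $v_c=|J_c|$, so that $\{1,\dots,u\}=\bigsqcup_c I_c$, $\{1,\dots,v\}=\bigsqcup_c J_c$, and in particular $\sum_c u_c=u$, $\sum_c v_c=v$. Since the insertions are commuting random variables, reordering gives
$$\XX^{i_1}(t_1)\cdots\XX^{i_u}(t_u)\,\PP_{j_1}(s_1)\cdots\PP_{j_v}(s_v)=\prod_{c=1}^r\Big(\prod_{k\in I_c}\XX^c(t_k)\prod_{l\in J_c}\PP_c(s_l)\Big),$$
so the factorization lemma yields $\lR{\XX^{i_1}(t_1)\cdots\PP_{j_v}(s_v)}=\prod_{c=1}^r\lR{\prod_{k\in I_c}\XX^c(t_k)\prod_{l\in J_c}\PP_c(s_l)}_c$.

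Now apply the scalar Wick's Theorem (Proposition \ref{prop-Wick}) to each factor, read for bijections between the point sets indexed by $I_c$ and $J_c$: one gets $\lR{\prod_{k\in I_c}\XX^c(t_k)\prod_{l\in J_c}\PP_c(s_l)}_c=\delta_{u_c,v_c}\,\hbar^{u_c}\sum_{\lambda_c}\prod_{k\in I_c}G(t_k,s_{\lambda_c(k)})$, with $\lambda_c$ running over the bijections $I_c\to J_c$. Multiplying over $c$ produces $\prod_{c=1}^r\delta_{u_c,v_c}$, which vanishes unless $u_c=v_c$ for every $c$; when it does not vanish $u=v$, $\prod_c\hbar^{u_c}=\hbar^u$, and $\prod_{c=1}^r\sum_{\lambda_c}\prod_{k\in I_c}G(t_k,s_{\lambda_c(k)})=\sum_{(\lambda_c)_c}\prod_{k=1}^u G\big(t_k,s_{\lambda(k)}\big)$, where a tuple of bijections $(\lambda_c\colon I_c\to J_c)_c$ is glued into a single $\lambda\in S_u$ via $\{1,\dots,v\}=\bigsqcup_c J_c$. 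The remaining identification is purely combinatorial: these glued $\lambda$ are precisely the ones satisfying $i_k=j_{\lambda(k)}$ for all $k$, a condition equal to $\prod_{k=1}^u\delta^{i_k}_{j_{\lambda(k)}}=1$, while any $\lambda\in S_u$ not of this form contributes $\prod_k\delta^{i_k}_{j_{\lambda(k)}}=0$; hence $\sum_{(\lambda_c)_c}(\cdots)=\sum_{\lambda\in S_u}\prod_{k=1}^u G(t_k,s_{\lambda(k)})\,\delta^{i_k}_{j_{\lambda(k)}}$. Folding the count condition into $\delta_{u,v}$ — if $u\ne v$ the claimed right-hand side has $\delta_{u,v}=0$ and the left-hand side vanishes by the above; if $u=v$ but the multiset $\{i_k\}$ differs from $\{j_l\}$ then the $S_u$-sum is already empty — gives exactly the stated formula.

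The factorization of the Gaussian integrals over components and the exchange of a finite product with the $\sigma\to\infty$ limit are routine given the scalar analysis; the one step needing genuine care is the bookkeeping of the last paragraph, namely checking that the product over components of sums over partial bijections reassembles into a single sum over $S_u$ weighted by $\prod_k\delta^{i_k}_{j_{\lambda(k)}}$ and that $\delta_{u,v}$ absorbs all degenerate index configurations. A slightly different but equivalent route would be to first prove the vector-valued analogue of Lemma \ref{lem-generating} by applying that lemma componentwise (independence again turning the generating function into a product over $i$ of factors of the form $e^{J_i^TQ_iK_i}$) and then extract the correlation by comparing coefficients of the source monomials; this merely reorganizes the same computation and carries the same combinatorial core.
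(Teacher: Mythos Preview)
Your argument is correct. The paper does not actually supply a proof of Proposition~\ref{prop-Wick-2}; it simply asserts that the scalar construction ``extends straightforwardly to the vector-valued fields (with independent components)'' and then states the result. Your main route --- factorizing the normalized expectation over the independent components, applying the scalar Wick's Theorem (Proposition~\ref{prop-Wick}) to each factor, and reassembling the product of sums over partial bijections $\lambda_c\colon I_c\to J_c$ into a single sum over $\lambda\in S_u$ weighted by $\prod_k\delta^{i_k}_{j_{\lambda(k)}}$ --- is exactly one natural way to make this extension precise, and the bookkeeping you describe is accurate. Your alternative route (a vector-valued version of Lemma~\ref{lem-generating} obtained componentwise, followed by coefficient extraction) is the other natural realization and is closest in spirit to how the paper proves the scalar case; as you note, the two organize the same computation.
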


The following limit property of two-point topological correlations will be used in Section \ref{sec:q-HKR}

\begin{lem}\label{lem-limit}
In the limit $t_1\to t_2$
\ali{
\lim_{t_1\to t_2^-} 
\lR{\XX^k(t_1) \PP_j(t_2)}
&= -\frac{i\hbar}{2} \delta^k_j \\
\lim_{t_1\to t_2^+} 
\lR{\XX^k(t_1) \PP_j(t_2)}
&= \phantom{-}\frac{i\hbar}{2} \delta^k_j.
}
\end{lem}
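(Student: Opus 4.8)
The statement is just the evaluation of the explicit formula for the propagator at the two one-sided limits $t_1 \to t_2$, so the plan is to reduce everything to the already-established identity $\lR{\XX^k(t_1)\PP_j(t_2)} = i\hbar\, G(t_1,t_2)\,\delta^k_j$ together with the piecewise formula for $G$ displayed before Lemma~\ref{lem-generating}. First I would recall that $G(t,s)$, viewed as a function of $s-t$, equals $\tfrac{1}{i}\bigl(s-t-\tfrac12\bigr)$ for $0 < s-t < 1$ and is periodic with period $1$; hence $iG(t,s) = s-t-\tfrac12$ on that fundamental domain. The claimed limits then follow by letting the argument of $G$ approach $0$ from either side.

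For the limit $t_1 \to t_2^-$, we have $s := t_2$ and $t := t_1$ with $s - t \to 0^+$, so $s-t$ stays in the open interval $(0,1)$ where $iG = (s-t) - \tfrac12 \to -\tfrac12$; multiplying by $\hbar\,\delta^k_j$ gives $\lR{\XX^k(t_1)\PP_j(t_2)} \to -\tfrac{i\hbar}{2}\delta^k_j$. For the limit $t_1 \to t_2^+$, the difference $t_2 - t_1$ is now small and negative, so by periodicity we replace it by $t_2 - t_1 + 1 \to 1^-$, which lies in $(0,1)$, and there $iG = (t_2-t_1+1) - \tfrac12 \to \tfrac12$; multiplying by $\hbar\,\delta^k_j$ yields $\lR{\XX^k(t_1)\PP_j(t_2)} \to \tfrac{i\hbar}{2}\delta^k_j$. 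Written out:
\ali{
\lim_{t_1\to t_2^-} \lR{\XX^k(t_1)\PP_j(t_2)}
&= i\hbar\,\delta^k_j \lim_{t_1\to t_2^-} G(t_1,t_2)
= i\hbar\,\delta^k_j \cdot \frac{1}{i}\LR{-\frac12}
= -\frac{i\hbar}{2}\,\delta^k_j, \\
\lim_{t_1\to t_2^+} \lR{\XX^k(t_1)\PP_j(t_2)}
&= i\hbar\,\delta^k_j \lim_{t_1\to t_2^+} G(t_1,t_2)
= i\hbar\,\delta^k_j \cdot \frac{1}{i}\LR{\frac12}
= \frac{i\hbar}{2}\,\delta^k_j.
}

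There is essentially no obstacle here; the only subtlety worth flagging is bookkeeping about which endpoint of the fundamental domain $[0,1)$ one approaches, i.e. correctly invoking the periodicity of $G$ for the $t_1\to t_2^+$ case so that $s-t$ is read as a number near $1$ rather than near $-1$ (the Green function has a jump discontinuity of size $1/i$ across the diagonal, which is exactly the source of the sign asymmetry). One could alternatively avoid the piecewise formula entirely and work from the Fourier series $iG(t,s) = \sum_{n>0}\frac{1}{\pi n}\sin(2\pi n(s-t))$, noting that it is the standard sawtooth series whose one-sided limits at $0$ are $\mp\tfrac12$; but since the closed form is already recorded in the excerpt, the direct substitution above is cleanest.
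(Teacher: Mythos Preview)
Your approach matches the paper's exactly: its entire proof is the single sentence ``This follows from the corresponding property of $G(t_1,t_2)$,'' and you have simply written out that property and substituted the one-sided limits of the piecewise formula for $G$. The only quibble is the arithmetic in your display---$i\hbar\cdot\tfrac{1}{i}\cdot(-\tfrac12)$ equals $-\tfrac{\hbar}{2}$, not $-\tfrac{i\hbar}{2}$---but this discrepancy traces back to a factor-of-$i$ inconsistency already present between the paper's single-variable and vector-valued propagator formulas, not to any flaw in your method.
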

\begin{proof} This follows from the corresponding property of $G(t_1,t_2)$. 

\end{proof}



\subsubsection{Schwartz functions and Moyal product}\label{section:Moyal-product}
We next consider topological correlation of random perturbations of Schwartz functions on $\R^{2r}$. We explain how Moyal product arises in this setting. 

Let $f_i(\xx,\xp)$ be Schwartz functions on $\R^{2r}$ with Fourier transforms 
$$
f_i(\xx,\xp)=\int_{\R^{2r}} d\xx'd\xp' \, \hat f(\xx',\xp')e^{2\pi i(\xx\xx'+\xp\xp')}
,\quad 
\hat f_i(\xx',\xp')=\int_{\R^{2r}} d\xx d\xp \,   f(\xx,\xp)e^{-2\pi i(\xx\xx'+\xp\xp')}.
$$
Then we can compute
\ali{
&\phantom{=}\lR{\prod_{j=0}^n \sO_{f_j}(t_j)} 
=\lR{\prod_{j=0}^n f_j(\xx+\XX(t_j),\xp+\PP(t_j))}\\
&=\lR{\prod_{j=0}^n \int_{\R^{2r}}d\xx_j'd\xp_j'\, \hat f_j(\xx_j',\xp_j')\exp\LR{2\pi i \LR{\xx_j'(\xx+\XX(t_j))+\xp_j'(\xp+\PP(t_j))}}}\\
&=\lim_{\sigma\to\infty}\frac{1}{\EE\Lr{e^{\frac{i}{\hbar}\int_{S^1}\sum\limits_{i=1}^r \PP_i d\XX^i }}}
\int_{\R^{2}}\prod_{j=0}^n d\xx_j'd\xp_j'\, \hat f_j(\xx_j',\xp_j') e^{2\pi i (\xx\xx_j'+\xp\xp_j')} \cdot \\
&\qquad\qquad\qquad  \EE\Lr{e^{\frac{i}{\hbar}\int_{S^1}\sum\limits_{i=1}^r \PP_i d\XX^i} \prod_{j=0}^n \exp\LR{2\pi i \LR{\xx_j'\XX(t_i)+\xp_j'\PP(t_i)}}} \\
&\xlongequal{\text{Lemma \ref{lem-generating}}} \lim_{\sigma\to\infty}
\int_{\R^{2r}}\prod_{j=0}^n d\xx_j'd\xp_j'\, \hat f_j(\xx_j',\xp_j') e^{2\pi i (\xx\xx_j'+\xp\xp_j')}     \cdot \exp\LR{(2\pi i)^2\sum_{j=0}^n\sum_{k=0}^n i\hbar G(t_j,t_k)\xx_j'\xp_k'+O(\sigma^{-1})}\\
&=\int_{\R^{2r}}\prod_{j=0}^n d\xx_j'd\xp_j'\, \hat f_j(\xx_j',\xp_j') e^{2\pi i (\xx\xx_j'+\xp\xp_j')}  \cdot \exp\LR{(2\pi i)^2\sum_{j=0}^n\sum_{k=0}^n i\hbar G(t_j,t_k)\xx_j'\xp_k'}. 
}
Thus for any $t_i\in S^1$, the topological correlation $\lR{\prod\limits_{j=0}^n \sO_{f_j}(t_j)}$ is a Schwartz function on $\R^{2r}$.

Notice that the Green's function on $S^1$ closely resembles that on $\R$ at small scales
$$
\lim_{(t-s)\to 0} \LR{G(t,s)-\frac{1}{2}\sign(t-s)}=0, \qquad   \sign(t):=\begin{cases}
1 & t>0 \\ 0 & t=0 \\-1 & t<0
\end{cases}.
$$
As a result, 
\ali{
\lim_{(t_0-t_1)\to 0^+}\lR{\sO_{{f_0}}(t_0)\sO_{{f_1}}(t_1)} (\xx,\xp)
=\int_{\R^{2r}}\prod_{j=0}^1 d\xx_j'd\xp_j'\, \hat f_j(\xx_j',\xp_j') e^{2\pi i (\xx\xx_j'+\xp\xp_j')}\cdot \exp\LR{ \frac{i\hbar}{2} (2\pi i)^2  (\xx_0'\xp_1'-\xp_0'\xx_1')} 
}
which gives the Moyal product of $f_0$ and $f_1$.

\section{Hochschild Homology}\label{sec:q-HKR}

As an application of the topological correlation function (Definition \ref{defn-Top}), we explain how the geometry of $S^1$ is related to the notion of Hochschild homology and present an explicit construction of a quantum version of the Hochschild-Kostant-Rosenberg (HKR) map.

\subsection{Hochschild Homology}
We collect some basic notions on Hochschild Homology of associative algebras. See for example \cite{Weibel} for further details. 

Let $A$ be an associative algebra over a base field $k$. Define the space of $m$-th Hochschild chains
$$
C_m(A) = A^{\otimes m+1}, \qquad  \otimes = \otimes_k 
$$
which can be viewed as the space of $m+1$ ordered observables valued in $A$ on $S^1$. Define a map 
$$
b:C_m(A)\to C_{m-1}(A)
$$ 
by
\ali{
& b(a_0\otimes a_1\otimes\cdots\otimes a_m) \\ 
=\; &a_0 a_1\otimes a_2\otimes\cdots\otimes a_m 
- a_0\otimes a_1a_2\otimes\cdots\otimes a_m  
+\cdots+(-1)^{m-1}a_0\otimes a_1\otimes\cdots\otimes a_{m-1}a_m \\
& +(-1)^m a_ma_0\otimes a_1\otimes\cdots\otimes a_{m-1}.
}
\begin{center}
\tikzset{every picture/.style={line width=0.75pt}} 
\begin{tikzpicture}[x=0.75pt,y=0.75pt,yscale=-1,xscale=1]
\draw   (150,155.33) .. controls (150,130.3) and (170.3,110) .. (195.33,110) .. controls (220.37,110) and (240.67,130.3) .. (240.67,155.33) .. controls (240.67,180.37) and (220.37,200.67) .. (195.33,200.67) .. controls (170.3,200.67) and (150,180.37) .. (150,155.33) -- cycle ;
\draw  [fill={rgb, 255:red, 0; green, 0; blue, 0 }  ,fill opacity=1 ] (230.4,180.69) .. controls (230.4,179.31) and (231.52,178.19) .. (232.9,178.19) .. controls (234.28,178.19) and (235.4,179.31) .. (235.4,180.69) .. controls (235.4,182.07) and (234.28,183.19) .. (232.9,183.19) .. controls (231.52,183.19) and (230.4,182.07) .. (230.4,180.69) -- cycle ;
\draw   (360,155.33) .. controls (360,130.3) and (380.3,110) .. (405.33,110) .. controls (430.37,110) and (450.67,130.3) .. (450.67,155.33) .. controls (450.67,180.37) and (430.37,200.67) .. (405.33,200.67) .. controls (380.3,200.67) and (360,180.37) .. (360,155.33) -- cycle ;
\draw  [fill={rgb, 255:red, 0; green, 0; blue, 0 }  ,fill opacity=1 ] (234.18,137.58) .. controls (234.18,136.2) and (235.3,135.08) .. (236.68,135.08) .. controls (238.06,135.08) and (239.18,136.2) .. (239.18,137.58) .. controls (239.18,138.96) and (238.06,140.08) .. (236.68,140.08) .. controls (235.3,140.08) and (234.18,138.96) .. (234.18,137.58) -- cycle ;
\draw  [fill={rgb, 255:red, 0; green, 0; blue, 0 }  ,fill opacity=1 ] (195.33,110) .. controls (195.33,108.62) and (196.45,107.5) .. (197.83,107.5) .. controls (199.21,107.5) and (200.33,108.62) .. (200.33,110) .. controls (200.33,111.38) and (199.21,112.5) .. (197.83,112.5) .. controls (196.45,112.5) and (195.33,111.38) .. (195.33,110) -- cycle ;
\draw  [fill={rgb, 255:red, 0; green, 0; blue, 0 }  ,fill opacity=1 ] (150.4,139.36) .. controls (150.4,137.97) and (151.52,136.86) .. (152.9,136.86) .. controls (154.28,136.86) and (155.4,137.97) .. (155.4,139.36) .. controls (155.4,140.74) and (154.28,141.86) .. (152.9,141.86) .. controls (151.52,141.86) and (150.4,140.74) .. (150.4,139.36) -- cycle ;
\draw  [fill={rgb, 255:red, 0; green, 0; blue, 0 }  ,fill opacity=1 ] (444.2,174.12) .. controls (444.2,172.74) and (445.32,171.62) .. (446.7,171.62) .. controls (448.08,171.62) and (449.2,172.74) .. (449.2,174.12) .. controls (449.2,175.5) and (448.08,176.62) .. (446.7,176.62) .. controls (445.32,176.62) and (444.2,175.5) .. (444.2,174.12) -- cycle ;
\draw  [fill={rgb, 255:red, 0; green, 0; blue, 0 }  ,fill opacity=1 ] (446.57,143.23) .. controls (446.57,141.85) and (447.69,140.73) .. (449.07,140.73) .. controls (450.45,140.73) and (451.57,141.85) .. (451.57,143.23) .. controls (451.57,144.61) and (450.45,145.73) .. (449.07,145.73) .. controls (447.69,145.73) and (446.57,144.61) .. (446.57,143.23) -- cycle ;
\draw  [fill={rgb, 255:red, 0; green, 0; blue, 0 }  ,fill opacity=1 ] (407.73,110.43) .. controls (407.73,109.05) and (408.85,107.93) .. (410.23,107.93) .. controls (411.61,107.93) and (412.73,109.05) .. (412.73,110.43) .. controls (412.73,111.81) and (411.61,112.93) .. (410.23,112.93) .. controls (408.85,112.93) and (407.73,111.81) .. (407.73,110.43) -- cycle ;
\draw  [fill={rgb, 255:red, 0; green, 0; blue, 0 }  ,fill opacity=1 ] (359.8,142.79) .. controls (359.8,141.41) and (360.92,140.29) .. (362.3,140.29) .. controls (363.68,140.29) and (364.8,141.41) .. (364.8,142.79) .. controls (364.8,144.17) and (363.68,145.29) .. (362.3,145.29) .. controls (360.92,145.29) and (359.8,144.17) .. (359.8,142.79) -- cycle ;
\draw   (458.88,144.28) .. controls (459.62,147.85) and (460,151.55) .. (460,155.33) .. controls (460,161.92) and (458.84,168.23) .. (456.7,174.07) ; 
\draw   (456.83,153.31) -- (458.84,143.54) -- (463.57,152.32) ;
\draw   (462.63,166.96) -- (456.39,174.74) -- (456.17,164.77) ;
\draw (124,148) node [anchor=north west][inner sep=0.75pt]   [align=left] {$b$};
\draw (275,148) node [anchor=north west][inner sep=0.75pt]   [align=left] {$= \quad\,\,\,\,\sum \pm$};
\draw (184,206) node [anchor=north west][inner sep=0.75pt]   [align=left] {$\cdots$};
\draw (394,206) node [anchor=north west][inner sep=0.75pt]   [align=left] {$\cdots$};
\end{tikzpicture}
\end{center}

By  associativity of $A$, we have
\ali{
&b^2(a_0\otimes a_1\otimes\cdots\otimes a_m) \\
=& b\LR{a_0 a_1\otimes a_2\otimes\cdots\otimes a_m 
- a_0\otimes a_1a_2\otimes\cdots\otimes a_m \pm\cdots} \\
=& (a_0 a_1)a_2 \otimes\cdots\otimes a_m - a_0 (a_1 a_2)\otimes\cdots\otimes a_m \pm\cdots \\
=&0.
}
Thus $(C_m(A),b)$ forms a chain complex 
$$
\cdots \overset{b}{\to} C_m(A) \overset{b}{\to} C_{m-1}(A) \overset{b}{\to} \cdots \overset{b}{\to} C_1(A) \overset{b}{\to} C_0(A)
$$
which is called the Hochschild chain complex. Its homology
$$
HH_\bullet(A) := H_\bullet (C_\bullet(A),b)
$$
is called the Hochschild homology. The following theorem is a fundamental result. 

\begin{thm}[Hochschild-Kostant-Rosenberg (HKR)]\label{thm-HKR}
Let $A=k[y^i]$ be a polynomial ring and $\Omega_A^\bullet=k[y^i,dy^i]$ be the algebraic differential forms. Then 
$$
HH_m(A) = \Omega_A^m.
$$
\end{thm}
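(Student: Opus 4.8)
The plan is to construct explicitly the two maps between $HH_m(A)$ and $\Omega_A^m$ and show they are mutually inverse. For the map $\Omega_A^m \to HH_m(A)$ one uses the antisymmetrized HKR assignment
\[
f_0\, df_1 \wedge \cdots \wedge df_m \;\longmapsto\; \frac{1}{m!}\sum_{\lambda \in S_m} \operatorname{sgn}(\lambda)\, f_0 \otimes f_{\lambda(1)} \otimes \cdots \otimes f_{\lambda(m)},
\]
and one checks first that this is well defined (the right side lands in $\ker b$, using that $A$ is commutative so that the Leibniz-type terms in $b$ cancel in pairs after antisymmetrization) and that it is $A$-linear and a derivation in each $f_i$, so it factors through $\Omega_A^m$. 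For the reverse map one writes down a chain map $C_\bullet(A) \to \Omega_A^\bullet$ (the ``projection'' $a_0 \otimes \cdots \otimes a_m \mapsto \frac{1}{m!} a_0\, da_1 \wedge \cdots \wedge da_m$), verifies it is a chain map (i.e. it kills $b$, again using commutativity), and hence descends to homology.

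The key steps, in order, are: first, fix notation and recall that $A = k[y^1,\dots,y^r]$ is commutative, so $C_\bullet(A)$ computes the usual Hochschild homology; second, define the two maps above and check well-definedness and the chain-map property of each — this is the verification that $b$ composed with the symmetrization is zero, which is the combinatorial heart of the argument; third, show the composite $\Omega_A^\bullet \to HH_\bullet(A) \to \Omega_A^\bullet$ is the identity, which is immediate from $d(f_1 f_2) = f_1\, df_2 + f_2\, df_1$ and reordering; fourth, show the composite $HH_\bullet(A) \to \Omega_A^\bullet \to HH_\bullet(A)$ is the identity on homology, which requires producing an explicit chain homotopy contracting the difference. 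The standard device here is the Koszul/simplicial homotopy built from the cyclic shuffle or the explicit contracting homotopy on the Hochschild complex of a polynomial ring; equivalently one can argue by the multiplicativity of Hochschild homology together with the rank-one computation $HH_\bullet(k[y]) = k[y] \oplus k[y]\,dy$, which one does by hand from the periodicity resolution $A \otimes A \xrightarrow{y\otimes 1 - 1\otimes y} A \otimes A$.

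The main obstacle I expect is the last step: proving surjectivity onto homology, equivalently that the antisymmetrization map induces an isomorphism and not merely an injection. Checking that $b$ annihilates the symmetrized chains is a routine sign bookkeeping, and the one-variable computation is elementary; but assembling these into a proof that $HH_m(A) = \Omega_A^m$ on the nose — with the correct identification of generators — requires either the Künneth-type multiplicativity statement for Hochschild homology of a tensor product of algebras (which needs flatness and a spectral-sequence or explicit shuffle-product argument) or an explicit global contracting homotopy on $C_\bullet(k[y^1,\dots,y^r])$. I would opt for the inductive route via $k[y^1,\dots,y^r] = k[y^1,\dots,y^{r-1}] \otimes_k k[y^r]$ and the shuffle product on Hochschild chains, reducing everything to the transparent rank-one case, since that keeps all signs under control and avoids invoking spectral sequences; the cost is that one must verify the shuffle product is compatible with the HKR maps on both sides, which is where the real work lies. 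Since this is a classical result (see \cite{Weibel}), in the write-up I would state the identification and the two explicit maps, carry out the rank-one case and the well-definedness checks in detail, and then cite the standard homotopy/Künneth argument for the remaining isomorphism.
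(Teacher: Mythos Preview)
The paper does not actually prove this theorem. It states HKR as a classical result, defines the chain map
\[
\sigma(f_0\otimes f_1\otimes\cdots\otimes f_m)=f_0\,df_1\wedge\cdots\wedge df_m,
\]
checks by a telescoping Leibniz computation that $\sigma\circ b=0$, and then simply asserts that $\sigma$ is a quasi-isomorphism, referring the reader to \cite{Weibel}. So your proposal is considerably more detailed than the paper's own treatment: the paper provides only your ``reverse map'' and the verification that it is a chain map, and stops there.

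Your outline is essentially the standard textbook proof and is sound in structure. Two small points worth tightening. First, a normalization slip: with $1/m!$ on \emph{both} the antisymmetrization map and the projection, the composite $\Omega_A^m\to HH_m(A)\to\Omega_A^m$ comes out as $1/m!$ times the identity rather than the identity; drop the factor on the projection (as the paper does) or on the antisymmetrization. Second, the reason you give for the composite $\Omega\to HH\to\Omega$ being the identity (``immediate from $d(f_1f_2)=f_1\,df_2+f_2\,df_1$'') is not quite the relevant fact --- that Leibniz identity is what makes the antisymmetrization map well defined on $\Omega_A^m$; the composite being the identity is just the observation that $\sigma$ sends each antisymmetrized tensor back to the wedge it came from. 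The genuinely nontrivial direction, as you correctly identify, is the other composite, and your plan to reduce to $k[y]$ via the shuffle product and K\"unneth is the right one.
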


HKR theorem says that Hochschild homology plays the role of differential forms on ordinary commutative space. In general, for an associative but non-commutative algebra $A$, we can view
$$
HH_\bullet  = \text{Non-commutative differential forms} 
$$
which is a basic algebraic tool for doing calculus in the noncommutative world. 

Let $A=k[y^i]$. Consider the following HKR map
\ali{
\sigma: C_\bullet(A) &\lra  \Omega_A^\bullet \\
f_0\otimes f_1\otimes\cdots\otimes f_m &\lmt f_0 \,df_1\w df_2\w \cdots\w df_m.
}
We can check 
\ali{
&\sigma(b(f_0\otimes f_1\otimes\cdots\otimes f_m)) \\
=&\sigma \LR{f_0 f_1\otimes f_2\otimes\cdots\otimes f_m 
- f_0\otimes f_1f_2\otimes\cdots\otimes f_m \pm\cdots} \\
=& f_0 f_1d f_2\w\cdots\w df_m -f_0 d(f_1f_2)\w df_3\w\cdots\w df_m\pm\cdots \\
 &\quad +(-1)^{m-1}f_0df_1\w\cdots\w d(f_{m-1}f_m) + (-1)^m f_mf_0 df_1\w\cdots\w df_{m-1} \\
=&0
}
so $\sigma$ defines a chain map
$$
\sigma: (C_\bullet(A),b) \lra (\Omega_A^\bullet,0)
$$
from the Hochschild chain complex  to the chain complex of differential forms with zero differential. In fact $\sigma$ is a quasi-isomorphism, which induces an isomorphism by passing to homology
$$
\sigma: HH_\bullet(A) \overset{\simeq}{\lra} \Omega_A^\bullet.
$$
This isomorphism is the one appeared in Theorem \ref{thm-HKR}.

\subsection{Quantum HKR}
Let $\cA=\C[x^1,\cdots,x^r,p_1,\cdots,p_r]$ be the ring of polynomial functions on the phase space $\R^{2r}$. We have a canonical quantization of $\cA$ to the associative Weyl algebra 
$$
\cA^\hbar := (\cA[\hbar],*)
$$
where $*$ is the Moyal product
$$
f*g := f e^{\frac{i}{2}\hbar\sum\limits_{j=1}^r\LR{ {\frac{\overleftarrow\partial}{\partial x^j}} {\frac{\overrightarrow\partial}{\partial p_j}} - {\frac{\overleftarrow\partial}{\partial p_j}} {\frac{\overrightarrow\partial}{\partial x^j}}}} g.
$$
Here ${\frac{\overleftarrow\partial}{\partial(-)}}$ means applying the derivative to the function on the left, and ${\frac{\overrightarrow\partial}{\partial(-)}}$ means applying the derivative to the function on the right. 
In this section, we will describe a quantum version $\sigma^\hbar$ of the HKR map $\sigma$ that intertwines the Hochschild chain complex $C_\bullet(\cA^\hbar)$ of the Weyl algebra $\cA^\hbar $.

\subsubsection{$S^1$-product}
Consider observables constructed in Section \ref{sec:Topological-correlation}
$$
\sO_{f}(t) =  f(\xx+\XX(t),\xp+\PP(t)),\qquad t\in S^1. 
$$
\begin{defn}
Let $f_0,f_1,\cdots,f_m\in\cA$. We define their $S^1$-product 
$$
\bbracket{f_0\otimes f_1\otimes\cdots\otimes f_m}_{S^1}\in 
\cA[\hbar]
$$ 
by the integration of their topological correlation 
\ali{
&\bbracket{f_0\otimes f_1\otimes\cdots\otimes f_m}_{S^1} 
=m! \int_{\Cyc_{m+1}(S^1)} d\theta_0 d\theta_1\cdots d\theta_m  \TopCor{\sO_{f_0}(\theta_0)\sO_{f_1}(\theta_1)\cdots\sO_{f_m}(\theta_m)}.
}
Here the integration is taken over
$$
\Cyc_{m+1}(S^1) = 
\{(\theta_0,\theta_1,\cdots,\theta_m)\in (S^1)^{m+1} \,|\, \theta_0,\theta_1,\cdots,\theta_m \text{ are distinct in clockwise cyclic order}\}.
$$
\end{defn}

\begin{eg}
Given two polynomials $f_0,f_1\in \R[x,p]$,
\ali{
\bbracket{f_0\otimes f_1}_{S^1} 
=& \int_{\Cyc_2(S^1)}d\theta_0 d\theta_1 \, f_0(x,p) e^{i\hbar G(\theta_0,\theta_1) \frac{\overset{\leftarrow}{\partial}}{\partial x}\frac{\overset{\rightarrow}{\partial}}{\partial p} + i\hbar G(\theta_1,\theta_0) \frac{\overset{\leftarrow}{\partial}}{\partial p}\frac{\overset{\rightarrow}{\partial}}{\partial x}} f_1(x,p) \\
=& \int_0^1d\theta \, f_0(x,p) e^{i\hbar (\theta-1/2) \LR{\frac{\overset{\leftarrow}{\partial}}{\partial x}\frac{\overset{\rightarrow}{\partial}}{\partial p} - \frac{\overset{\leftarrow}{\partial}}{\partial p}\frac{\overset{\rightarrow}{\partial}}{\partial x}}} f_1(x,p) \\
=& \int_{-\frac{1}{2}}^{\frac{1}{2}}d\theta \, f_0(x,p) e^{i\hbar \theta  \LR{\frac{\overset{\leftarrow}{\partial}}{\partial x}\frac{\overset{\rightarrow}{\partial}}{\partial p} - \frac{\overset{\leftarrow}{\partial}}{\partial p}\frac{\overset{\rightarrow}{\partial}}{\partial x}}} f_1(x,p) \\
=& \sum_{m=0}^\infty \int_{-\frac{1}{2}}^{\frac{1}{2}}\theta^{2m} d\theta \, \frac{(i\hbar)^{2m}}{(2m)!} f_0(x,p) \LR{\frac{\overset{\leftarrow}{\partial}}{\partial x}\frac{\overset{\rightarrow}{\partial}}{\partial p} - \frac{\overset{\leftarrow}{\partial}}{\partial p}\frac{\overset{\rightarrow}{\partial}}{\partial x}}^{2m} f_1(x,p) \\
=&  \sum_{m=0}^\infty \frac{(i\hbar)^{2m}}{(2m+1)2^{2m}(2m)!} f_0(x,p) \LR{\frac{\overset{\leftarrow}{\partial}}{\partial x}\frac{\overset{\rightarrow}{\partial}}{\partial p} - \frac{\overset{\leftarrow}{\partial}}{\partial p}\frac{\overset{\rightarrow}{\partial}}{\partial x}}^{2m} f_1(x,p).
}
For example,  
\ali{
\bbracket{x\otimes p}_{S^1} &= xp \\
\bbracket{x^2\otimes p^2}_{S^1} &= x^2p^2-\frac{\hbar^2}{6}
}
\end{eg}

\begin{eg}
We use Wick's Theorem to compute
\ali{
\bbracket{x\otimes xp\otimes p}_{S^1} 
=&x^2p^2  
 + 2! \int_{\Cyc_3(S^1)}d\theta_0 d\theta_1 d\theta_2\, i\hbar \LR{G(\theta_0,\theta_1)+G(\theta_1,\theta_2)+G(\theta_0,\theta_2)} xp \\
 &\qquad + 2! \int_{\Cyc_3(S^1)}d\theta_0 d\theta_1 d\theta_2\, (i\hbar)^2  G(\theta_0,\theta_1) G(\theta_1,\theta_2) \\
=& x^2p^2
 + 2xp (i\hbar) \int_{1\ge \theta_1\ge\theta_2\ge 0} d\theta_1 d\theta_2\,  \LR{G(1,\theta_1)+G(\theta_1,\theta_2)+G(1,\theta_2)} \\
 &\qquad +2(i\hbar)^2 \int_{1\ge \theta_1\ge\theta_2\ge 0} d\theta_1 d\theta_2\, G(1,\theta_1) G(\theta_1,\theta_2) \\
=& x^2p^2
 +2 (i\hbar)xp \int_{1\ge \theta_1\ge\theta_2\ge 0} d\theta_1 d\theta_2\, \Lr{\LR{\theta_1-\frac{1}{2}}-\LR{\LR{\theta_1-\theta_2}-\frac{1}{2}}+\LR{\theta_2-\frac{1}{2}}} \\
 &\qquad  + 2(i\hbar)^2 \int_{1\ge \theta_1\ge\theta_2\ge 0} d\theta_1 d\theta_2\, \LR{\theta_1-\frac{1}{2}} \LR{\frac{1}{2}-\LR{\theta_1-\theta_2}} \\
=& x^2p^2+\frac{i\hbar}{6} xp.
}
\end{eg}

\subsubsection{Quantum HKR}

Given $f$, its total differential is 
$$
df= \sum_i \LR{\pp{f}{x^i} dx^i+ \pp{f}{p_i}dp_i}.
$$
We denote 
$$
\sO_{df}(\theta) := \sum_i \LR{\sO_{\partial_{x^i}f}(\theta) dx^i + \sO_{\partial_{p_i}f}(\theta) dp^i}.
$$

\begin{defn}
We define the quantum HKR map
$$
\sigma^\hbar: \cA^{\otimes m+1}\lra \Omega^m_A[\hbar]
$$
by
\ali{
\sigma^\hbar(f_0\otimes f_1\otimes\cdots\otimes f_m) 
:= \int_{\Cyc_{m+1}(S^1)} d\theta_0 d\theta_1\cdots d\theta_m  \TopCor{\sO_{f_0}(\theta_0)\sO_{df_1}(\theta_1)\cdots \sO_{df_m}(\theta_m)}.
}
This can be also formally expressed as the $S^1$-product
$$
\sigma^\hbar(f_0\otimes f_1\otimes\cdots\otimes f_m)
=\frac{1}{m!} \bbracket{f_0\otimes df_1\otimes\cdots\otimes df_m}_{S^1}.
$$
\end{defn}

Using Wick's Theorem (Proposition \ref{prop-Wick-2}), we have the explicit formula 
\ali{
&\sigma^\hbar(f_0\otimes f_1\otimes\cdots\otimes f_m) \\
=&\int_{\Cyc_{m+1}(S^1)} d\theta_0 d\theta_1\cdots d\theta_m \, e^{i\hbar \sum\limits_{i,j=0}^m G(\theta_i,\theta_j) \pp{}{x^{(i)}}\pp{}{p^{(j)}}} f_0(\xx^{(0)},\xp^{(0)})
df_1(\xx^{(1)},\xp^{(1)}) 
\cdots df_m(\xx^{(m)},\xp^{(m)})\bigg|_{\substack{\xx^{(0)}=\cdots=\xx^{(m)}=\xx \\  \xp^{(0)}=\cdots =\xp^{(m)}=\xp}}.
}

\begin{prop}Introduce the following operator 
$$
\Delta: \Omega_A^m \lra \Omega_A^{m-1},\qquad \Delta = \sum_i \pp{}{x^i} \iota_{\partial_{p_i}} - \pp{}{p_i} \iota_{\partial_{x^i}}.
$$
Here $\iota_{V}$ is the contraction with the vector field $V$.  Then $$
\sigma^\hbar(b(-)) = i\hbar\Delta (\sigma^\hbar(-))
$$
where $b$ is the Hochschild differential with respect to the Moyal product. 
\end{prop}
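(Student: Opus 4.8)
The plan is to evaluate both sides by Wick's Theorem (Proposition~\ref{prop-Wick-2}) and then identify them through a Stokes-type argument on the configuration space $\Cyc_{m+1}(S^1)$; I write as if $r=1$, the general case only adding the Kronecker deltas of Proposition~\ref{prop-Wick-2}. The first input is an operator-product rule: for $f,g\in\cA$, inside any topological correlation the colliding pair fuses,
\ali{
\O_f(\theta)\,\O_g(\theta')\ \longrightarrow\ \O_{f*g}(\theta')\ \text{ as }\theta\to\theta'\text{ from one side},\qquad \O_{g*f}(\theta')\ \text{ from the other.}
}
Indeed, by Wick's Theorem the only contraction between the two insertions that stays finite and nonzero as $\theta\to\theta'$ is the direct one, and by Lemma~\ref{lem-limit} it tends to $\pm\tfrac{i\hbar}{2}$ on the two $x$--$p$ legs; exponentiating these contractions reproduces precisely the Moyal exponent $e^{\pm\frac{i\hbar}{2}(\overleftarrow{\partial_x}\overrightarrow{\partial_p}-\overleftarrow{\partial_p}\overrightarrow{\partial_x})}$, while every other contraction of the two insertions with the remaining ones becomes a regular contraction at the single point $\theta'$, so the pair collapses to $\O_{f*g}(\theta')$ (the sign of $\hbar$, hence $f*g$ versus $g*f$, being fixed by the side of approach).

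The central input is the identity, valid on $\Cyc_{m+1}(S^1)$,
\ali{
i\hbar\,\Delta\,\TopCor{\O_{f_0}(\theta_0)\O_{df_1}(\theta_1)\cdots\O_{df_m}(\theta_m)}=-\sum_{a=1}^{m}(-1)^{a-1}\,\partial_{\theta_a}\TopCor{\O_{f_0}(\theta_0)\O_{df_1}(\theta_1)\cdots\O_{f_a}(\theta_a)\cdots\O_{df_m}(\theta_m)},
}
where on the right the $a$-th slot carries $\O_{f_a}$ instead of $\O_{df_a}$. One proves it by expanding both sides by Wick. On the left, $\Delta=\sum_i(\partial_{x^i}\iota_{\partial_{p_i}}-\partial_{p_i}\iota_{\partial_{x^i}})$ extracts one form factor $df_a$, contracts it and applies one more derivative; the diagonal pieces $\partial_{x^i}\partial_{p_i}f_a-\partial_{p_i}\partial_{x^i}f_a$ cancel, leaving a sum over $a$ (and over the slot $b\neq a$ the extra derivative hits) of correlators carrying a single $\partial_x^{(a)}\partial_p^{(b)}$-- or $\partial_p^{(a)}\partial_x^{(b)}$--insertion. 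On the right, $\partial_{\theta_a}$ differentiates $\O_{f_a}(\theta_a)$, producing $\dot\XX(\theta_a),\dot\PP(\theta_a)$, which Wick-contract with a $\PP,\XX$ in another slot $b$; since $\partial_{\theta_a}G(\theta_a,\theta_b)$ is a \emph{constant} for $\theta_a\neq\theta_b$ (the Green's function on $S^1$ is affine off the diagonal), each such contraction is position-independent and inserts exactly one $\partial_x^{(a)}\partial_p^{(b)}$ or $\partial_p^{(a)}\partial_x^{(b)}$ with the coefficient that makes the two sides agree slot by slot. The case $m=1$, namely $\Delta\,\TopCor{\O_{f_0}(\theta_0)\O_{df_1}(\theta_1)}=-\tfrac1{i\hbar}\partial_{\theta_1}\TopCor{\O_{f_0}(\theta_0)\O_{f_1}(\theta_1)}$, can be checked directly and pins down all normalizations.

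Given these, one integrates the central identity over $\Cyc_{m+1}(S^1)$. By Fubini, $\int_{\Cyc_{m+1}}\partial_{\theta_a}(\cdots)$ equals an integral over $\Cyc_m(S^1)$ in the remaining variables of the difference of boundary values of $(\cdots)$ at the two codimension-one faces of $\overline{\Cyc_{m+1}(S^1)}$ where $\theta_a$ meets a cyclic neighbour $\theta_{a+1}$ or $\theta_{a-1}$ (the integrand is bounded with one-sided limits there, so Stokes applies; higher collisions are negligible). On each face the operator-product rule fuses the pair: a face $\theta_a\to\theta_{a+1}$ with $1\le a\le m-1$ yields $\O_{f_a*(df_{a+1})}$, the matching face $\theta_{a+1}\to\theta_a$ coming from the $(a{+}1)$-st summand yields $\O_{(df_a)*f_{a+1}}$, and since $d(f_a*f_{a+1})=(df_a)*f_{a+1}+f_a*(df_{a+1})$ these combine — the opposite orientations of the two faces being absorbed by the signs $(-1)^{a-1},(-1)^a$ — into $(-1)^a$ times the integrand of $\sigma^\hbar(f_0\otimes\cdots\otimes f_a*f_{a+1}\otimes\cdots)$. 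The two remaining faces $\theta_1\to\theta_0$ and $\theta_m\to\theta_0$ fuse an $\O_f$ with the first slot $\O_{f_0}$, giving the integrands of $\sigma^\hbar((f_0*f_1)\otimes f_2\otimes\cdots)$ with sign $+1$ and of $\sigma^\hbar((f_m*f_0)\otimes f_1\otimes\cdots\otimes f_{m-1})$ with sign $(-1)^m$. Summing the faces reproduces exactly the alternating sum defining $\sigma^\hbar(b(f_0\otimes\cdots\otimes f_m))$, which is the claim.

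The hard part is the central identity; once it is in place the rest is formal bookkeeping. Its delicate points are (i) checking that, after the diagonal cancellation, the Wick expansion of $\Delta$ of the correlator matches the $\dot\XX,\dot\PP$ expansion of $\partial_{\theta_a}$ of the correlator slot by slot, including the $(-1)^{a-1}$ Koszul sign from $\iota$ extracting the $a$-th form factor, and (ii) aligning the arc orientations in the Stokes step with the alternating signs of $b$. I would settle all signs by first treating $m=1$ and $m=2$ explicitly and then confirming that the sign pattern propagates.
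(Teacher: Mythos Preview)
Your proposal is correct and follows the same strategy as the paper's proof: both compute the alternating sum $\sum_{a}(-1)^{a-1}\partial_{\theta_a}\TopCor{\O_{f_0}(\theta_0)\O_{df_1}(\theta_1)\cdots\O_{f_a}(\theta_a)\cdots\O_{df_m}(\theta_m)}$ (the paper's integrand $(\dagger)$) in two ways --- once via the boundary/Stokes evaluation together with the collision rule from Lemma~\ref{lem-limit} to produce $\sigma^\hbar(b(-))$, and once via the pointwise fact $\partial_{\theta_a}G(\theta_a,\theta_b)=\mathrm{const}$ to produce $i\hbar\,\Delta\,\sigma^\hbar(-)$. The only slip is the global sign in your central identity (it should be $+\sum_a(-1)^{a-1}$ rather than $-\sum_a(-1)^{a-1}$), which your planned $m=1$ check will catch.
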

\begin{proof} Let us consider the following integral 
\begin{align*}\label{the-following-integral}
\int_{\Cyc_{m+1}(S^1)} d\theta_0 d\theta_1\cdots d\theta_m   \,& \left\{
 {\pp{}{\theta_1}}\TopCor{\sO_{f_0}(\theta_0)\blue{\sO_{f_1}(\theta_1)}\cdots \sO_{df_m}(\theta_m)}  \nonumber \right.\\
\qquad &- {\pp{}{\theta_2}}\TopCor{\sO_{f_0}(\theta_0)\sO_{df_1}(\theta_1)\blue{\sO_{f_2}(\theta_2)}\cdots \sO_{df_m}(\theta_m)}  \nonumber\\
\qquad &+\cdots \nonumber\\
\qquad & \left.+(-1)^{m-1} {\pp{}{\theta_m}}\TopCor{\sO_{f_0}(\theta_0)\sO_{df_1}(\theta_1)\cdots \sO_{df_{m-1}}(\theta_{m-1})\blue{\sO_{f_m}(\theta_m)}}  \right\}\tag{$\dagger$} 
\end{align*}

We compute (\ref{the-following-integral}) in two ways. The first is to compute $\int d\theta_k \, \pp{}{\theta_k}(-)$ as a total derivative. Due to the cyclic ordering 
\begin{center}
\tikzset{every picture/.style={line width=0.75pt}} 

\begin{tikzpicture}[x=0.75pt,y=0.75pt,yscale=-1,xscale=1]

\draw   (180.67,120) .. controls (180.67,94.96) and (200.96,74.67) .. (226,74.67) .. controls (251.04,74.67) and (271.33,94.96) .. (271.33,120) .. controls (271.33,145.04) and (251.04,165.33) .. (226,165.33) .. controls (200.96,165.33) and (180.67,145.04) .. (180.67,120) -- cycle ;
\draw  [fill={rgb, 255:red, 0; green, 0; blue, 0 }  ,fill opacity=1 ] (251.56,155.56) .. controls (251.56,154.18) and (252.68,153.06) .. (254.06,153.06) .. controls (255.44,153.06) and (256.56,154.18) .. (256.56,155.56) .. controls (256.56,156.94) and (255.44,158.06) .. (254.06,158.06) .. controls (252.68,158.06) and (251.56,156.94) .. (251.56,155.56) -- cycle ;
\draw  [fill={rgb, 255:red, 0; green, 0; blue, 0 }  ,fill opacity=1 ] (268.62,125.36) .. controls (268.62,123.97) and (269.74,122.86) .. (271.12,122.86) .. controls (272.5,122.86) and (273.62,123.97) .. (273.62,125.36) .. controls (273.62,126.74) and (272.5,127.86) .. (271.12,127.86) .. controls (269.74,127.86) and (268.62,126.74) .. (268.62,125.36) -- cycle ;
\draw  [fill={rgb, 255:red, 0; green, 0; blue, 0 }  ,fill opacity=1 ] (255.06,87.66) .. controls (255.06,86.28) and (256.18,85.16) .. (257.56,85.16) .. controls (258.94,85.16) and (260.06,86.28) .. (260.06,87.66) .. controls (260.06,89.04) and (258.94,90.16) .. (257.56,90.16) .. controls (256.18,90.16) and (255.06,89.04) .. (255.06,87.66) -- cycle ;

\draw (264.05,74) node [anchor=north west][inner sep=0.75pt]   [align=left] {$\theta_{k-1}$};
\draw (276.82,118.15) node [anchor=north west][inner sep=0.75pt]   [align=left] {$\theta_{k}$};
\draw (258.82,153.23) node [anchor=north west][inner sep=0.75pt]   [align=left] {$\theta_{k+1}$};

\end{tikzpicture}
\end{center}
we have 
$$
\int d\theta_k \, \pp{}{\theta_k}(-) =\int_{\theta_{k+1}}^{\theta_{k-1}} d\theta_k \,\pp{}{\theta_k}(-)  = (-) \bigg|_{\theta_{k+1}}^{\theta_{k-1}}.
$$

By Wick's Theorem (Proposition \ref{prop-Wick-2}) and Lemma \ref{lem-limit}, we find the limit behavior
$$
\lim_{\theta_k\to \theta_{k-1}^-} \TopCor{\cdots \sO_f(\theta_{k-1})\sO_g(\theta_k)\cdots} 
=\TopCor{\cdots \sO_{f*g}(\theta_{k-1})\cdots} 
$$
$$
\lim_{\theta_k\to \theta_{k+1}^+} \TopCor{\cdots \sO_f(\theta_{k})\sO_g(\theta_{k+1})\cdots} 
=\TopCor{\cdots \sO_{f*g}(\theta_{k+1})\cdots}
$$
Therefore computing (\ref{the-following-integral}) in terms of total derivative leads to 
\ali{
&\frac{1}{(m-1)!} \bbracket{f_0*f_1\otimes df_2\otimes\cdots\otimes df_m}_{S^1}  -  \frac{1}{(m-1)!} \bbracket{f_0\otimes f_1*df_2\otimes\cdots\otimes df_m}_{S^1} \\
-&\frac{1}{(m-1)!}\bbracket{f_0\otimes df_1* f_2\otimes df_3\otimes\cdots\otimes df_m}_{S^1} + \frac{1}{(m-1)!}\bbracket{f_0\otimes df_1 \otimes f_2*df_3\otimes\cdots\otimes df_m}_{S^1} \\
+&\cdots \\
+&(-1)^{m-1} \frac{1}{(m-1)!} \bbracket{f_0\otimes df_1 \otimes\cdots\otimes df_{m-1}*f_m}_{S^1}  - (-1)^{m-1} \frac{1}{(m-1)!} \bbracket{f_m*f_0\otimes df_1 \otimes\cdots\otimes df_{m-1}}_{S^1}.
}

By the Moyal product formula, we have 
$$
d(f*g) = df*g+f*dg.
$$
Here 
$$
df*g = \sum_i \LR{\partial_{x^i}f*g}dx^i +  \LR{\partial_{p_i}f*g}dp_i 
$$
$$
f*dg = \sum_i \LR{f*\partial_{x^i}g}dx^i +  \LR{f*\partial_{p_i}g}dp_i 
$$

Thus  (\ref{the-following-integral})  becomes 
\ali{
& \frac{1}{(m-1)!}\bbracket{f_0*f_1\otimes df_2\otimes\cdots\otimes df_m}_{S^1}  
 - \frac{1}{(m-1)!} \bbracket{f_0 \otimes d(f_1*f_2)\otimes\cdots\otimes df_m}_{S^1}  
 +\cdots \\
+&(-1)^{m-1} \frac{1}{(m-1)!}\bbracket{f_0\otimes df_1 \otimes\cdots\otimes d(f_{m-1}*f_m)}_{S^1} 
 +(-1)^m \frac{1}{(m-1)!} \bbracket{f_m*f_0\otimes df_1 \otimes\cdots\otimes df_{m-1}}_{S^1} \\
=&\sigma^\hbar(b(f_0\otimes f_1\otimes\cdots\otimes f_m)).
}
Here the Hochschild differential $b$ corresponds to the Moyal product. 

The second way to approach (\ref{the-following-integral}) is to compute $\pp{}{\theta_k}\TopCor{\cdots}$ explicitly. Observe 
$$
\pp{}{\theta_k} G(\theta_k,\theta_j) = -1 = -\pp{}{\theta_k} G(\theta_j,\theta_k) \qquad \text{for }\quad \theta_j\ne \theta_k.
$$
Thus the above explicit formula of $\sigma^\hbar$ implies
\ali{
&\pp{}{\theta_k} \TopCor{\sO_{f_0}(\theta_0)\sO_{df_1}(\theta_1)\cdots \sO_{df_{k-1}}(\theta_{k-1})\sO_{f_k}(\theta_k)\cdots \sO_{f_m}(\theta_m)}  \\
=\;&i\hbar \pp{}{x^i} \TopCor{\sO_{f_0}(\theta_0)\sO_{df_1}(\theta_1)\cdots \sO_{df_{k-1}}(\theta_{k-1})\sO_{\partial_{p_i}f}(\theta_k)\cdots \sO_{df_m}(\theta_m)}  \\
& -i\hbar \pp{}{p_i} \TopCor{\sO_{f_0}(\theta_0)\sO_{df_1}(\theta_1)\cdots \sO_{df_{k-1}}(\theta_{k-1})\sO_{\partial_{x^i}f}(\theta_k)\cdots \sO_{df_m}(\theta_m)}
}
which leads to
$$
\text{(\ref{the-following-integral})}=i\hbar \Delta \sigma^\hbar(f_0\otimes f_1\otimes\cdots\otimes f_m).
$$

Comparing the two computations, we find 
$$
\sigma^\hbar(b(-)) = i\hbar\Delta (\sigma^\hbar(-)).
$$
\end{proof}

We can extend $\sigma^\hbar$ linearly in $\hbar$ to 
$$
\sigma^\hbar: C_\bullet(\cA^\hbar) \lra  \Omega_\cA^\bullet[\hbar] .
$$
Then the above calculation shows that $\sigma^\hbar$ is a morphism of chain complexes
$$
\sigma^\hbar: (C_\bullet(\cA^\hbar),b) \lra (\Omega_\cA^\bullet[\hbar],i\hbar\Delta).
$$
Thus $\sigma^\hbar$ provides a quantization of $\sigma$. It can be further shown that $\sigma^\hbar$ becomes a quasi-isomorphism if we specialize $\hbar$ to any non-zero value, say $\hbar=1$. 
Passing to  $2n$-th homology, $\sigma^\hbar$ leads to the Feigin-Felder-Shoikhet trace formula \cite{FFS}.

\begin{rmk}
Let $\omega\inv=\sum\limits_i \pp{}{x^i}\w \pp{}{p_i}$ be the Poisson bi-vector. Then $\Delta$ is the Lie derivative
$$
\Delta = \LL_{\omega\inv}.
$$
It is clear that $\Delta^2=0$. $\Delta$ is the prototype of  BV (Batalin–Vilkovisky) operator in the quantization of gauge theories \cite{BV}. The above quantum HKR map $\sigma^\hbar$ gives an equivalent description of the BV quantization of topological quantum mechanics \cite{GLX}, which leads to a QFT  approach to the algebraic index theorem as developed by Fedosov \cite{Fedosov-book} and Nest-Tsygan \cite{Nest-Tsygan}. 
\end{rmk}

\end{document}